\documentclass[11pt]{amsart}
\usepackage{amssymb}
\usepackage{amsmath, amscd}
\usepackage{amsthm}
\usepackage[table,xcdraw]{xcolor}
\usepackage{ulem}
\usepackage{nicematrix}
\usepackage{tikz}
\usepackage{circuitikz}
\usepackage[colorlinks=true, linkcolor=blue,urlcolor=blue]{hyperref}

\newtheorem{theorem}{Theorem}[section]

\newtheorem{proposition}[theorem]{Proposition}
\newtheorem{lemma}[theorem]{Lemma}

\newtheorem{corollary}[theorem]{Corollary}
\theoremstyle{definition}

\newtheorem{example}[theorem]{Example}

\newtheorem{remark}[theorem]{Remark}

\date\today

\begin{document}

\author[O. Hasanzadeh]{Omid Hasanzadeh}
\address{Department of Mathematics, Tarbiat Modares University, 14115-111 Tehran Jalal AleAhmad Nasr, Iran}
\email{o.hasanzade@modares.ac.ir; hasanzadeomiid@gmail.com}

\author[A. Moussavi]{Ahmad Moussavi}
\address{Department of Mathematics, Tarbiat Modares University, 14115-111 Tehran Jalal AleAhmad Nasr, Iran}
\email{moussavi.a@modares.ac.ir; moussavi.a@gmail.com}

\author[P. Danchev]{Peter Danchev}
\address{Institute of Mathematics and Informatics, Bulgarian Academy of Sciences, 1113 Sofia, Bulgaria}
\email{danchev@math.bas.bg; pvdanchev@yahoo.com}

\title[$U\sqrt{\Delta}$-rings]{Rings whose Non-Units are a Unit Multiple \\ of an Element from $\sqrt{\Delta(R)}$}
\keywords{Unit; Jacobson radical; $2$-good ring; $U\sqrt{\Delta}$-ring; $UN$-ring; Matrix ring; Group ring.}
\subjclass[2010]{16N20, 16S34, 16S50, 16U99}

\begin{abstract}
This paper introduces and studies a new class of rings called {\it $U\sqrt{\Delta}$-rings}. A ring $R$ is $U\sqrt{\Delta}$ if every non-unit element can be written as the product of a unit and an element from $\sqrt{\Delta(R)}$, where $\sqrt{\Delta(R)}$ consists of elements some power of which lies in the special subring $\Delta(R)$. We establish certain basic properties of these rings and, concretely, prove that they are simultaneously indecomposable and Dedekind-finite. We also show that the polynomial ring $R[x]$ and the Laurent polynomial ring $R[x, x^{-1}]$ are never $U\sqrt{\Delta}$-rings, while the power series ring $R[[x]]$ inherits this property from $R$. Likewise, for left (right) Artinian rings, the conditions of being a $U\sqrt{\Delta}$-ring and a $UN$-ring are equivalent, as well as these two conditions are preserved for the full matrix ring $M_n(R)$ of size $n\geq 1$ over $R$. In addition, for a commutative ring $R$, $M_n(R)$ is a $U\sqrt{\Delta}$-ring exactly when $R$ is local. Furthermore, we characterize when a group ring $RG$ is a $U\sqrt{\Delta}$-ring showing that, for a locally solvable group $G$, this occurs precisely when $R$ is a $U\sqrt{\Delta}$-ring and $G$ is a locally finite $p$-group for some prime $p \in J(R)$.
\end{abstract}

\maketitle

\section{Introduction and Motivations}

Throughout this article, all rings are assumed to be associative with identity. In \cite{cfine}, C\v{a}lug\v{a}reanu and Lam introduced the concept of fine rings. A ring is called {\it fine} if every non-zero element can be written as the sum of a unit and a nilpotent element. This notion gives a new and proper class of simple rings, which strictly contains the class of all simple Artinian rings (that is, the full matrix rings over division rings). Fine rings and several related extensions have been studied intensively by many authors; see, for example, \cite{czuun, cziun, nzmax, zgen}. Paralleling this, in \cite{cun}, C\v{a}lug\v{a}reanu introduced the concept of a $UN$-{\it ring} as being a ring in which every non-unit element is the product of a unit and a nilpotent element. This class of rings has also been investigated by several authors; see, for instance, \cite{vun, zun}.

Additionally, V\'{a}mos introduced in \cite{v2g} the so-called {\it 2-good rings}, defined as those rings in which every element can be expressed as the sum of two units. In \cite{cun}, it was demonstrated that $UN$-rings with a 2-good identity are, actually, 2-good rings. Furthermore, a question was posed there to identify a proper subclass $\mathcal{C}$ of rings such that
\[
\{\text{$UN$-rings with a 2-good identity}\} \subset \mathcal{C} \subset \{\text{2-good rings}\}.
\]
In \cite[Example~1]{zun}, Zhou showed that if a ring $R$ possesses a 2-good identity and $R/J(R)$ is a $UN$-ring, then $R \in \mathcal{C}$, where $J(R)$ denotes the Jacobson radical of $R$ as the intersection of all maximal left ideals of $R$.

In this context, Saini and Udar further introduced and explored in \cite{sd} a new class of rings that is a common generalization of $UN$-rings, termed {\it $U\sqrt{J}$-rings}. A ring $R$ is called $U\sqrt{J}$ if every non-unit element of $R$ can be represented as a product of a unit and an element from $\sqrt{J(R)}$.

Following Wang and Chen \cite{wc}, this radical can be expressed as
\[
\sqrt{J(R)}=\{\, a\in R \mid a^n \in J(R)\ \text{for some } n \ge 1 \,\}.
\]
They established that the class of $U\sqrt{J}$-rings strictly lies between the class of $UN$-rings with 2-good identity and the class of 2-good rings, as they gave an affirmative answer to the question posed by C\v{a}lug\v{a}reanu in \cite{cun} as stated above.

Moreover, for the unit group $U(R)$ consisting of all units of the ring $R$, the focus of this research examination is presently the subring

\[
\begin{aligned}
J(R) \subseteq \Delta(R)=\{x\in R \mid x+u\in U(R)\ \forall\,u\in U(R)\}
\\[2mm]=
\{x\in R \mid 1-xu\in U(R)\ \forall\,u\in U(R)\}
\\[2mm]=
\{x\in R \mid 1-ux\in U(R)\ \forall\,u\in U(R)\},
\end{aligned}
\]
which was first noticed by Lam \cite[\text{Exercise }4.24]{lamm}, and more recently explored in-depth by Leroy and Matczuk \cite{2}. Their results, particularly \cite[\text{Theorems}3 \text{and} 6]{2}, establish that \(\Delta(R)\) represents the largest Jacobson radical subring of \(R\) stable under multiplication by all units (or quasi-invertible elements). Furthermore, one has that $\Delta(R) = J(T)$, where \(T\) denotes the subring of \(R\) generated by its units, and the condition \(\Delta(R)=J(R)\) is satisfied uniquely when \(\Delta(R)\) forms an ideal of \(R\).

Now, imitating \cite{Udar2}, \( R \) is called a {\it $UQ$-ring} if every non-unit element of \( R \) can be written as the product of a unit and a {\it quasi-regular} element, where an element \( a \in R \) is said to be {\it quasi-regular} provided \( a + 1 \) is a unit.

In the current work, we introduce $U\sqrt{\Delta}$-{\it rings}, which is a logical non-trivial generalization of both $U\sqrt{J}$ and $UN$-rings, as follows: These are the rings $R$ in which each non-unit element is a product of a unit from $R$ and an element from $\sqrt{\Delta(R)}$, where $$\sqrt{\Delta(R)} := \{ x \in R \mid x^n \in J(R) \text{ for some } n \in \mathbb{N} \}.$$

Accordingly, we have the following inclusions among the mentioned above classes of rings:
\[
\{UN\text{-rings}\} \subsetneqq \{U\sqrt{J}\text{-rings}\} \subseteq{\{U\sqrt{\Delta}\text{-rings}\}} \subsetneqq \{UQ\text{-rings}\}.
\]

In Section~\ref{sec2}, we first define the basic properties of the set $\sqrt{\Delta(R)}$ and use them to construct non-trivial examples of $U\sqrt{\Delta}$-rings. Notably, we show that every $U\sqrt{\Delta}$-ring is, simultaneously, indecomposable and Dedekind-finite (see Proposition~\ref{indecomposable}). We also illustrate that, for any ring $R$, the polynomial ring $R[x]$ and the Laurent polynomial ring $R[x,x^{-1}]$ are {\it not} $U\sqrt{\Delta}$-rings, whereas the power series ring $R[[x]]$ is a $U\sqrt{\Delta}$-ring if, and only if, $R$ itself is a $U\sqrt{\Delta}$-ring (see Example~\ref{laurent}, Proposition~\ref{3.2} and Corollary~\ref{power}).

In Section~\ref{sec3}, for left (resp., right) Artinian ring $R$, we show that $R$ is a $U\sqrt{\Delta}$-ring if, and only if, $R$ is a $UN$-ring if, and only if, $M_n(R)$ is a $UN$-ring if, and only if, $M_n(R)$ is a $U\sqrt{\Delta}$-ring (see Theorem~\ref{art}). In case of commutative former ring $R$, we prove that $M_n(R)$ is a $U\sqrt{\Delta}$-ring if, and only if, $R$ is a local ring (see Corollary~\ref{mm}).

In Section~\ref{sec4}, we present necessary and sufficient conditions for the group ring $RG$ to be a $U\sqrt{\Delta}$-ring. Specifically, we establish that, for a ring $R$ and a locally solvable group $G$, the group ring $RG$ is a $U\sqrt{\Delta}$-ring if, and only if, $R$ is a $U\sqrt{\Delta}$-ring and $G$ is a locally finite $p$-group for some prime $p \in J(R)$ (see Theorem~\ref{group ring} and Corollary~\ref{loc}).

\medskip

For completeness of the exposition and the reader's convenience, we recall some basic notations which will be frequently used below: for an arbitrary ring $R$, the set of nilpotent elements of $R$, the set of idempotent elements of $R$, and the center of $R$ are, respectively, designed by ${\rm Nil}(R)$, ${\rm Id}(R)$ and $C(R)$. The polynomial and formal power series rings on the variable $x$ over $R$ are denoted by $R[x]$ and $R[[x]]$, respectively. Recall that a ring $R$ is {\it weakly 2-primal} if ${\rm Nil}(R) = {\rm L}(R)$, where the latter symbol means the Levitzki radical of $R$. E.g., every commutative or every reduced ring is always weakly 2-primal.

We also set $M_n(R)$ to denote the ring of all $n\times n$ matrices over $R$.

\section{Examples and Basic Properties}\label{sec2}

Let $R$ be a ring. Put
$$\sqrt{\Delta(R)} := \{ x \in R \mid   x^n \in \Delta(R) \text{ for some } n \in \mathbb{N} \}.$$

Now, define

$$U\sqrt{\Delta(R)} := \{ x \in R \mid x = ab \text{ for some } a \in U(R) \text{ and } b \in \sqrt{\Delta(R)} \}.$$


We begin our study with the following technicality used freely in the sequel.


\begin{lemma}\label{property of J sharp}
For each ring \( R \), we have:

(1) If \( a \in \sqrt{\Delta(R)} \) and \( b \in U(R) \) with \( ab = ba \), then \( ab \in \sqrt{\Delta(R)} \).

(2) If \( a \in \sqrt{\Delta(R)} \), then \( 1 - a \in U(R) \).

(3) If \( a \in \sqrt{\Delta(R)} \cap C(R) \), then \( a \in \Delta(R) \).

(4) For every ideal \( I \subseteq J(R) \), we have \( \sqrt{\Delta(R/I)} = \sqrt{\Delta(R)}/I \).

(5) If \( R = \prod_{i=1}^k R_i \), then \( \sqrt{\Delta(R)} = \prod_{i=1}^k \sqrt{\Delta(R_i)} \).

(6) $\sqrt{\Delta(R)} \Delta(R) \subseteq \Delta(R)$.

(7) $\sqrt{\Delta(R)} + \Delta(R) \subseteq \sqrt{\Delta(R)}$.

(8) If $a\in U(\sqrt{\Delta}(R))\cap C(R)$, then $1-a\in U(R)$.

(9) If $a\in U(\sqrt{\Delta}(R))$, then, for each $u,v\in U(R)$, $uav\in U\sqrt{\Delta}(R)$.

(10) If $a\in \sqrt{\Delta(R)}$, then, for each $u\in U(R)$, $u^{-1}au\in \sqrt{\Delta(R)}$.

(11) If $ac\in\sqrt{\Delta(R)}$, where $c\in C(R)\cap U(R)$, then $a\in \sqrt{\Delta(R)}$.
\end{lemma}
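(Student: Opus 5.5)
The plan is to reduce almost every item to one structural fact about $\Delta(R)$, due to Leroy and Matczuk \cite{2}: $\Delta(R)=J(T)$, where $T$ is the subring of $R$ generated by $U(R)$. From this, or directly from the definition, $\Delta(R)$ is a subring containing $J(R)$ with $U(R)\Delta(R)\subseteq\Delta(R)$ and $\Delta(R)U(R)\subseteq\Delta(R)$. The key observation I would prove first is
$$\sqrt{\Delta(R)}\subseteq T.$$
For this, take $a$ with $a^n\in\Delta(R)$. Since $1-a^n\in U(R)$ and
$$1-a^n=(1-a)(1+a+\cdots+a^{n-1})=(1+a+\cdots+a^{n-1})(1-a),$$
the element $1-a$ is a unit. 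This already gives (2). It also shows $a=1-(1-a)\in T$.

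Next I would derive (6) and (7) from $\Delta(R)=J(T)$ being an ideal of $T$.
For (6): if $x\in\sqrt{\Delta(R)}\subseteq T$ and $d\in\Delta(R)=J(T)$, then $xd\in J(T)=\Delta(R)$.
For (7): in $T/J(T)$ the image of $x+d$ equals the image of $x$, which is nilpotent. Hence $(x+d)^n\in J(T)=\Delta(R)$.
The same argument shows that $a^n\in\Delta(R)$ forces $a^m=a^{m-n}a^n\in\Delta(R)$ for all $m\ge n$, because $a^{m-n}\in T$. So exponents can always be enlarged. This gives (5): Leroy and Matczuk give $\Delta(\prod R_i)=\prod\Delta(R_i)$, and a componentwise element of $\prod\sqrt{\Delta(R_i)}$ lies in $\sqrt{\Delta(R)}$ once we take the maximum of the finitely many exponents.
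For (4), I would use the known equality $\Delta(R/I)=\Delta(R)/I$ for $I\subseteq J(R)$. Then $(a+I)^n\in\Delta(R/I)$ if and only if $a^n\in\Delta(R)$, since $I\subseteq J(R)\subseteq\Delta(R)$.

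The remaining items are power computations combined with closure of $\Delta(R)$ under unit multiplication.
For (1): since $ab=ba$, we have $(ab)^n=a^nb^n\in\Delta(R)$.
For (10): $(u^{-1}au)^n=u^{-1}a^nu\in\Delta(R)$.
For (11): $(ac)^n=a^nc^n\in\Delta(R)$ because $c$ is central. Multiplying by $c^{-n}$ gives $a^n\in\Delta(R)$.
For (3): let $a$ be central with $a^n\in\Delta(R)$, and let $u\in U(R)$. Then $(au)^n=a^nu^n\in\Delta(R)$, so $1-(au)^n$ is a unit. As in the proof of (2), the factorization through $1-au$ with commuting factors shows $1-au\in U(R)$. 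Hence $a\in\Delta(R)$.
For (8): write $a=ub$ with $u\in U(R)$ and $b\in\sqrt{\Delta(R)}$. Since $a$ is central, $u$ commutes with $a$, hence with $b=u^{-1}a$. Therefore $a^n=u^nb^n\in\Delta(R)$, so $a\in\sqrt{\Delta(R)}$, and (2) gives $1-a\in U(R)$.
For (9): with $a=wb$, write
$$uav=(uwv)(v^{-1}bv),$$
and apply (10) to the second factor.

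The only step needing care is the inclusion $\sqrt{\Delta(R)}\subseteq T$, together with the correct citation of $\Delta(R)=J(T)$. Everything in (5)–(7) hinges on it, since $\sqrt{\Delta(R)}$ is not obviously closed under the relevant operations otherwise. I would also double-check that the Leroy–Matczuk results for quotients by ideals inside $J(R)$ and for finite direct products are stated in the form used.
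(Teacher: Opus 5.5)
Your proposal is correct. On items (1)--(4) and (8)--(11) it matches the paper's proof: power computations, closure of $\Delta(R)$ under multiplication by units, and $\Delta(R/I)=\Delta(R)/I$.

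The real difference is in (6) and (7).

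The paper argues elementarily. For $a\in\sqrt{\Delta(R)}$ and $b\in\Delta(R)$, it notes $(1-a)b\in\Delta(R)$ because $1-a$ is a unit, so $ab=b-(1-a)b\in\Delta(R)$. It then gets (7) by expanding $(a+b)^k$, where every term other than $a^k$ contains a factor $b$. That expansion tacitly also needs the mirror inclusion $\Delta(R)\sqrt{\Delta(R)}\subseteq\Delta(R)$, which follows in the same way from $b(1-a)$.

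You instead show $\sqrt{\Delta(R)}\subseteq T$ and use $\Delta(R)=J(T)$. This makes $\sqrt{\Delta(R)}$ exactly the preimage in $T$ of ${\rm Nil}(T/J(T))$. Two-sided absorption, (7), and the enlargement of exponents then all follow at once from $J(T)$ being an ideal of $T$.

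Your route depends on the Leroy--Matczuk identification. The paper quotes it in its introduction, and it is easy to verify directly, since $U(T)=U(R)$ and $\Delta(R)$ is additively closed and stable under unit multiplication. The paper's route uses only these elementary closure properties. In exchange, yours is more conceptual, and it makes explicit the common-exponent point in (5), which the paper calls straightforward without addressing.
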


\begin{proof}
(1) Suppose \( a \in \sqrt{\Delta(R)} \) and $b\in U(R)$ with \( ab=ba \). Thus, there exists \( n \in \mathbb{N} \) such that \( a^n \in \Delta(R) \), and so \( (ab)^n = a^nb^n \in \Delta(R) \) by \cite[Lemma 1(2)]{2}, whence \( ab \in \sqrt{\Delta(R)} \).

(2) Suppose \( a \in \sqrt{\Delta(R)} \). Thus, there exists \( n \in \mathbb{N} \) such that \( a^n \in \Delta(R) \). So, \[ (1-a)(1+a+ \cdots + a^{n-1}) = 1-a^n \in U(R) ,\] and hence \( 1-a \in U(R) \).

(3) Suppose \( a \in \sqrt{\Delta(R)} \cap C(R) \). Then, for every \( r \in U(R) \), it follows from (1) that \( ar \in \sqrt{\Delta(R)} \). So, in view of (2), \( 1-ar \in U(R) \), and thus \( a \in \Delta(R) \).

(4) This statement easily follows from $\Delta(R/I)=\Delta(R)/I$ (see \cite[Proposition 6(3)]{2}).

(5) Knowing that \( \Delta(R) = \prod_{i=1}^k \Delta(R_i) \) (see, e.g., \cite[Lemma 1(5)]{2}), the proof is straightforward.

(6) Let $a \in \sqrt{\Delta(R)}$ and $b \in \Delta(R)$. By part (2), we have $1-a \in U(R)$. Since $\Delta(R)$ is closed under multiplication by invertible elements, we obtain $b-ab=(1-a)b \in \Delta(R)$. Moreover, since $\Delta(R)$ is closed under addition, we conclude that $ab \in \Delta(R)$.

(7) Choose $a \in \sqrt{\Delta(R)}$ and $b \in \Delta(R)$. Then, there exists $k \in \mathbb{N}$ such that $a^k \in \Delta(R)$. In virtue of part (6) and the fact that $\Delta(R)$ is closed under addition, we deduce
\[
(a+b)^k = a^k + \sum_{\text{finite}} (\text{products of } k \text{ factors } a \text{ and } b) \in \Delta(R) + \Delta(R) \subseteq \Delta(R).
\]

(8) Write $a=uq$, where $u\in U(R)$ and $q\in \sqrt{\Delta(R)}$. Note that \[ qu = u^{-1}uqu = u^{-1}au  = a = uq .\] Now, $a\in \sqrt{\Delta(R)}$ holds via (1), so $1-a\in U(R)$ thanks to (2).

(9) Suppose $a = xy$ is a $U\sqrt{\Delta}$-decomposition, i.e., $x \in U(R)$ and $y \in \sqrt{\Delta(R)}$. Then, one writes that $uav = (uxv)(v^{-1}yv)$, which is also a $U\sqrt{\Delta}$-decomposition.

(10) This is clearly true, because $\Delta(R)$ remains invariant under multiplication by units (see \cite[Lemma 1(2)]{2}).

(11) Assuming that $ac\in \sqrt{\Delta(R)}$, there exists some $n\in \mathbb{N}$ such that $(ac)^n\in \Delta(R)$.
Since $c\in C(R)$, we get $(ac)^n=a^nc^n \in \Delta(R)$. Viewing \cite[Lemma 1(2)]{2}, the last implies that $a^n\in \Delta(R)$. Hence, $a\in \sqrt{\Delta(R)}$.
\end{proof}


Two types of commentaries now arise.

\begin{remark}\label{local}
For each ring \( R \), we have validity of the following four points:

(1) It is well known that $\Delta(R)\subseteq \sqrt{\Delta(R)}$ and ${\rm Nil}(R)\subseteq \sqrt{\Delta(R)}$;
however, the converse implications do {\it not} hold in general. To check that, set $S:=M_{2}(\mathbb{F}_{2})$,
$K=\mathbb{F}_{2}[[x]]$, and $L=S\times K$. Consider the element
$
A=\begin{pmatrix}
	1 & 1\\
	1 & 1
\end{pmatrix}\in S.
$
Then, one verifies that $A^{2}=0\in \Delta(S)$, which yields that $A\in \sqrt{\Delta(S)}$, while
$A\not\in \Delta(S)$. Moreover, $x\in J(K)\subseteq \sqrt{\Delta(K)}$, but
$x\not\in {\rm Nil}(K)$. Now, let $a:=(A,x)\in L$. Utilizing Lemma\ref{property of J sharp}(5), we obtain
$a\in \sqrt{\Delta(L)}$; however, $a\not\in J(L)$ and $a\not\in {\rm Nil}(L)$.

(2) \( \sqrt{\Delta(R)} \cap {\rm Id}(R) = \{0\} \). In fact, if \( e \in \sqrt{\Delta(R)} \cap {\rm Id}(R) \), then Lemma~\ref{property of J sharp}(2) ensures that \( 1-e \in {\rm Id}(R) \cap U(R) \), and so \( e=0 \).

(3) \( R \) is a local ring if, and only if, \( R=U(R) \cup \sqrt{\Delta(R)} \). Indeed, if \( R \) is local, then
one sees that \[ R=U(R) \cup J(R) \subseteq U(R) \cup \sqrt{\Delta(R)} \subseteq R ,\] and thus \( R=U(R) \cup \sqrt{\Delta(R)} \).

Conversely, suppose \( R=U(R) \cup \sqrt{\Delta(R)} \) and \( a \notin U(R) \). Then, \( a \in \sqrt{\Delta(R)} \), so that Lemma~\ref{property of J sharp}(2) employs to get that \( 1-a \in U(R) \), whence \( R \) is a local ring.

(4) \( \sqrt{\Delta(R)} \cap U(R) = \emptyset \). Indeed, if \( u \in \sqrt{\Delta(R)} \cap U(R) \), then there exists \( n \in \mathbb{N} \) such that \( u^n \in \Delta(R) \), and therefore \( 0=1-u^{-n}u^n \in U(R) \), which is an obvious contradiction.
\end{remark}

\begin{remark}\label{locall}
It is well known that $\sqrt{J(R)}\subseteq \sqrt{\Delta(R)}$; however, the converse implication does {\it not} hold in general. To verify that, with \cite[Ex 4.24]{lamm} at hand we choose $A$ to be any commutative domain which is not $J$-semi-simple (i.e., $J(A)$ is non-zero). Now, consider $R:=A[t]$. It is easy to see that $J(R)={\rm Nil}(R)=(0)$ and hence $\sqrt{J(R)}=(0)$. But, $\Delta(R)$ is non-zero and so $\sqrt{\Delta(R)}$ is non-zero, as expected, giving our claim.
\end{remark}

We now proceed by proving the following.

\begin{proposition}
Let \( a, b \in R \). If \( 1 - ab \in \sqrt{\Delta(R)} \), then \( 1 - ba \in \sqrt{\Delta(R)} \) holds if and only if \( a \in U(R) \).	
\end{proposition}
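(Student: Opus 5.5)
The plan is to prove the two directions separately. Both reduce to Lemma~\ref{property of J sharp}(2), which says $1-x\in U(R)$ whenever $x\in\sqrt{\Delta(R)}$, and Lemma~\ref{property of J sharp}(10), which says $\sqrt{\Delta(R)}$ is invariant under conjugation by units.

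\emph{Sufficiency.} Assume $a\in U(R)$. We have the identity
\[
1-ba = a^{-1}(1-ab)\,a .
\]
Since $1-ab\in\sqrt{\Delta(R)}$ by hypothesis, Lemma~\ref{property of J sharp}(10) applied with $u=a$ gives $1-ba\in\sqrt{\Delta(R)}$.

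\emph{Necessity.} Assume both $1-ab$ and $1-ba$ lie in $\sqrt{\Delta(R)}$. By Lemma~\ref{property of J sharp}(2), $ab = 1-(1-ab)\in U(R)$, and likewise $ba = 1-(1-ba)\in U(R)$.
\begin{itemize}
\item From $ab\in U(R)$, the element $b(ab)^{-1}$ is a right inverse of $a$, since $a\cdot b(ab)^{-1}=1$.
\item From $ba\in U(R)$, the element $(ba)^{-1}b$ is a left inverse of $a$, since $(ba)^{-1}b\cdot a=1$.
\end{itemize}
An element with both a left and a right inverse is a unit, with the two inverses coinciding. Hence $a\in U(R)$.

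There is no real obstacle here. The only point needing care is to use both hypotheses in the necessity direction. The condition on $1-ab$ alone gives only a one-sided inverse, and in a ring that is not Dedekind-finite that is not enough to conclude $a\in U(R)$.
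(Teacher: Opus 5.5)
Your proof is correct and is essentially the paper's argument. Sufficiency uses the identity $1-ba=a^{-1}(1-ab)a$ together with Lemma~\ref{property of J sharp}(10), and necessity uses Lemma~\ref{property of J sharp}(2) to get $ab,ba\in U(R)$; you also spell out the left/right-inverse step that the paper leaves implicit.
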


\begin{proof}
Let $a\in U(R)$. Thus, Lemma \ref{property of J sharp}(10) works to have $$1-ba=a^{-1}(1-ab)a \in \sqrt{\Delta(R)}.$$

Conversely, since $1 - ab, 1-ba \in \sqrt{\Delta(R)}$, we derive that $ab, ba\in U(R)$ and hence $a\in U(R)$.
\end{proof}

For a ring \( R \), we standardly say that \( {\rm Id}(R) \) (resp., \( U(R) \) and \( \sqrt{\Delta(R)} \)) is trivial whenever \( {\rm Id}(R) = \{0,1\} \) (resp., \( U(R) = \{1\} \) and \( \sqrt{\Delta(R)} = \{0\} \)).

\medskip

We now pay attention to prove the following principal result.

\begin{theorem}
Let \( R \) be a ring such that \( R = U(R) \cup \sqrt{\Delta(R)} \cup {\rm Id}(R) \). Then, exactly one of the following statements holds:
	
(1) \( R \) is a local ring;
	
(2) \( R \) is a Boolean ring;
	
(3) \( R \) is a non-abelian ring with \( {\rm char}(R) = 2 \).
\end{theorem}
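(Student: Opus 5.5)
Throughout, I assume $R=U(R)\cup\sqrt{\Delta(R)}\cup{\rm Id}(R)$. The proof splits according to whether ${\rm Id}(R)$ is trivial, and in the non-trivial case according to whether $R$ is abelian.

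\emph{Trivial idempotents.} Suppose ${\rm Id}(R)=\{0,1\}$. Since $0\in\sqrt{\Delta(R)}$ and $1\in U(R)$, the hypothesis collapses to $R=U(R)\cup\sqrt{\Delta(R)}$. Remark~\ref{local}(3) then gives that $R$ is local.

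\emph{Non-trivial idempotents: characteristic 2.} Now fix a non-trivial idempotent $e$. Then $R$ is not local, since local rings have only trivial idempotents. I claim ${\rm char}(R)=2$ and will prove it by examining $x=1+1$ via the trichotomy.
\begin{itemize}
\item If $2\in U(R)$, consider $y=e-2$ (or similar elements such as $2e-1$ and $-e$). When $y$ falls into $\sqrt{\Delta(R)}$ or ${\rm Id}(R)$, I would push toward a contradiction.
\item A cleaner route is to look at $-e$. If $-e\in U(R)$, then $e$ is a unit, forcing $e=1$. If $-e\in\sqrt{\Delta(R)}$, then $e^n=\pm e\in\Delta(R)$, so $e\in\sqrt{\Delta(R)}\cap{\rm Id}(R)=\{0\}$ by Remark~\ref{local}(2). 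Hence $-e$ is idempotent, i.e. $e=e^2=(-e)^2=-e$, so $2e=0$.
\item The same argument applied to $1-e$ gives $2(1-e)=0$. Adding, $2=0$, so ${\rm char}(R)=2$.
\end{itemize}

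\emph{Abelian case: Boolean.} Assume in addition that $R$ is abelian, so $e$ is central. I want every element to be idempotent. Take $u\in U(R)$ and consider $eu$, which lies in the corner $eRe$.
\begin{itemize}
\item $eu$ is not a unit of $R$, since otherwise $e$ would be invertible.
\item If $eu\in\sqrt{\Delta(R)}$, then $eu$ is central in $eR$, and some power gives $eu^n\in\Delta(R)$. Multiplying by the unit $u^{-n}$, which commutes with $e$, and using \cite[Lemma 1(2)]{2}, yields $e\in\Delta(R)$, hence $e=0$, a contradiction.
\item So $eu$ is idempotent: $eu^2=eu$, i.e. $eu=e$. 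By symmetry $(1-e)u=1-e$, and therefore $u=1$. Thus $U(R)=\{1\}$.
\end{itemize}
The same trick applies to $a\in\sqrt{\Delta(R)}$. The element $1+a$ is a unit by Lemma~\ref{property of J sharp}(2) (recall $-a\in\sqrt{\Delta(R)}$), so $1+a=1$ and $a=0$. Hence $R=\{1\}\cup\{0\}\cup{\rm Id}(R)={\rm Id}(R)$, which means $R$ is Boolean.

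\emph{Non-abelian case.} If $R$ is non-abelian, then together with ${\rm char}(R)=2$ we are in case (3).

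\emph{Exclusivity.} The three cases are mutually exclusive. A local Boolean ring is $\mathbb{F}_2$, and I would handle this overlap by reading (2) as excluding the local case, via the non-trivial idempotent condition. A Boolean ring is abelian, so (2) and (3) cannot both hold. A local ring is abelian, so (1) and (3) cannot both hold.

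The main obstacle I expect is the step $eu\in\sqrt{\Delta(R)}\Rightarrow e\in\Delta(R)$, because $eu$ need not be central in $R$. I would handle it by writing $(eu)^n=eu^n$ and applying the unit-invariance of $\Delta(R)$.
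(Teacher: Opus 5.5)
Your argument is correct, but it is organized differently from the paper's.

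The paper splits on whether $\sqrt{\Delta(R)}=\{0\}$:
\begin{itemize}
\item If $\sqrt{\Delta(R)}=\{0\}$, then $R=U(R)\cup{\rm Id}(R)$, and the paper invokes an external classification of such rings (division ring or Boolean).
\item If $\sqrt{\Delta(R)}\neq\{0\}$ and there is a non-trivial idempotent $e$, it first obtains ${\rm char}(R)=2$. It excludes $2\in U(R)$ via $2e$ and Lemma~\ref{property of J sharp}(11), then treats $2\in{\rm Id}(R)$ and $2\in\sqrt{\Delta(R)}$ separately, the latter through $3e$.
\item Finally, it rules out abelianness by the $eu=e$ argument applied to $u=1+d$ with $0\neq d\in\sqrt{\Delta(R)}$.
\end{itemize}

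You split instead on idempotents and then on abelianness. Your single test element $-e$ is not a unit and not in $\sqrt{\Delta(R)}$, hence idempotent, so $2e=0$. This gives the characteristic with no case analysis on $2$. In the abelian case, the same $eu=e$ argument yields $U(R)=\{1\}$, after which $1+a\in U(R)$ forces $\sqrt{\Delta(R)}=\{0\}$. This is the paper's contradiction read contrapositively. It lets you prove the Boolean conclusion directly, so your proof is self-contained and does not depend on the classification of rings with $R=U(R)\cup{\rm Id}(R)$.

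Two small cleanups are worth making. Delete the abandoned first bullet about $e-2$. Also delete the assertion that $eu$ is central in $eR$: it is neither justified nor needed, since centrality of $e$ already gives $(eu)^n=eu^n$.

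Your remark on exclusivity is on target. $\mathbb{F}_2$ satisfies the hypothesis and is both local and Boolean, so ``exactly one'' is literally false as stated, and the paper's proof never addresses exclusivity at all. Reading (2) as the non-local Boolean case, which is exactly what your case split produces, is the natural repair. Your observations that local rings and Boolean rings are abelian then correctly separate (3) from (1) and (2).
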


\begin{proof}
We divide the argument into two main cases depending on the nature of $\sqrt{\Delta(R)}$.

\medskip
	
\textbf{Case 1:} Suppose $\sqrt{\Delta(R)} = \{0\}$. Then, we have $R = U(R) \cup Id(R)$. According to \cite[Theorem 2.4]{cc}, this forces that either $R$ is a division ring, and hence local, or $R$ is Boolean.

\medskip
	
\textbf{Case 2:} Assume now that $\sqrt{\Delta(R)} \neq \{0\}$. Let $d \in \sqrt{\Delta(R)} \setminus \{0\}$, and define $u := 1 + d$. Since $d \not= 0$, we find $u \in U(R) \setminus \{1\}$ implying that the unit group is non-trivial.

\medskip
	
\textbf{Subcase 2.1:} If ${\rm Id}(R) = \{0,1\}$, then we perceive $R = U(R) \cup \sqrt{\Delta(R)}$, which assures that $R$ is local owing to Remark \ref{local}(3).

\medskip
	
\textbf{Subcase 2.2:} Suppose all three subsets $U(R)$, $\sqrt{\Delta(R)}$ and ${\rm Id}(R)$ are non-trivial. Let $e \in {\rm Id}(R) \setminus \{0,1\}$, and again write $u = 1 + d \in U(R) \setminus \{1\}$. We show now that $2 \not\in U(R)$.
	
Assume, for contradiction, that $2 \in U(R)$. Consider the element $2e$. If $2e \in {\rm Id}(R)$, then $(2e)^2 = 2e$ insures $e = 0$, contradicting the choice of $e$. Thus, $2e$ must lie in $\sqrt{\Delta(R)}$. But, viewing Lemma \ref{property of J sharp}(11), this gives $e \in \sqrt{\Delta(R)}$, contradicting $e \in {\rm Id}(R)$. So, $2 \not\in U(R)$, as promised.
	
Therefore, $2$ has to lie in either ${\rm Id}(R)$ or $\sqrt{\Delta(R)}$. If $2^2 = 2$, then $2 = 0$, i.e., ${\rm char}(R) = 2$. Alternatively, if $2 \in \sqrt{\Delta(R)}$, then $3 = 1 + 2 \in U(R)$.
	
We now examine the element $3e$. Clearly, $3e \not\in U(R)$ as $e \neq 1$. If $3e \in \sqrt{\Delta(R)}$, then Lemma \ref{property of J sharp}(11) again guarantees $e = 0$, a contradiction. Thus, $3e \in {\rm Id}(R)$ and, since $(3e)^2 = 3e$, we get $6e = 0$, whence $2e = 0$. A similar computation using $1 - e$ leads to $2(1 - e) = 0$, so that $2 = 0$ and again ${\rm char}(R) = 2$.
	
To conclude argumentation, we show that $R$ is not abelian. Suppose on the contrary that it is abelian, then $ue = eu$. However, $ue \not\in U(R)$ and $ue \not\in \sqrt{\Delta(R)}$. So, $ue \in {\rm Id}(R)$ means $(ue)^2 = ue$, i.e., $ue = e$ and, likewise $u(1 - e) = 1 - e$, thus inferring $$u = ue + u(1 - e) = e + (1 - e) = 1,$$ and contradicting $u \not= 1$, as asked. Hence, $R$ is a non-abelian ring with ${\rm char}(R) = 2$, concluding the assertion.
\end{proof}

The next claims are pretty elementary, but worthy of mentioning being useful in our further considerations.

\begin{example}

(1) Every element in $\sqrt{\Delta(R)}$ is a $U\sqrt{\Delta}$-element.
		
(2) Every element in ${\rm Nil}(R)$ is a $U\sqrt{\Delta}$-element.
		
(3) If $R$ is a $UU$-ring, that is, $U(R)=1+{\rm Nil}(R)$, then $R$ is $U\sqrt{\Delta}$ if and only if $R$ is $UN$.
		
(4) Every simple Artinian ring is a $U\sqrt{\Delta}$-ring.
\end{example}


The following statement shows that each $U\sqrt{\Delta}$-ring is both indecomposable and Dedekind-finite.

\begin{proposition}\label{indecomposable}
Let \( R \) be a \( U\sqrt{\Delta} \)-ring. Then,

(1) \( R \) does not have a non-trivial central idempotent. In particular, \( R \) is an indecomposable ring.

(2) \( R \) is Dedekind-finite.
\end{proposition}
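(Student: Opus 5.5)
For (1), let $e$ be a central idempotent with $e\neq 0,1$. Then $e$ is not a unit, so the $U\sqrt{\Delta}$ hypothesis gives $e=uq$ with $u\in U(R)$ and $q\in\sqrt{\Delta(R)}$. Since $e$ is central, $u^{-1}eu=e$. From this I would extract $qu=u^{-1}(uq)u=e=uq$, so $u$ and $q$ commute; this is exactly the computation in Lemma~\ref{property of J sharp}(8). Lemma~\ref{property of J sharp}(1) then gives $e=uq\in\sqrt{\Delta(R)}$. By Remark~\ref{local}(2) we have $\sqrt{\Delta(R)}\cap{\rm Id}(R)=\{0\}$, so $e=0$, a contradiction. (Alternatively, Lemma~\ref{property of J sharp}(3) puts $e$ in $\Delta(R)$, so $1-e$ is a unit idempotent and $e=0$.) Indecomposability follows at once, since a decomposition $R\cong R_1\times R_2$ produces the non-trivial central idempotent $(1,0)$.

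For (2), suppose $ab=1$ and, for contradiction, $ba\neq 1$. Then $f:=ba$ is an idempotent with $f\neq 1$, and $a$ is not a unit, since otherwise $b=a^{-1}$ and $ba=1$. Applying the hypothesis to $a$ gives $a=uq$ with $u\in U(R)$ and $q\in\sqrt{\Delta(R)}$. Then $1=ab=uqb$, so $q(bu)=u^{-1}u=1$, i.e. $q$ has a right inverse $c:=bu$. By Lemma~\ref{property of J sharp}(2), $1-q\in U(R)$.

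The main obstacle is to turn the one-sided invertibility of $q$ into a contradiction. I would use $q^n\in\Delta(R)$ together with $q^nc^n=1$. The key step is the identity $1-q^n(c^n u')=1-u'$ for a unit $u'$, and the choice I would try first is $u'=1$. Since $\Delta(R)$ is stable under multiplication by units and the definition of $\Delta(R)$ gives $1-xv\in U(R)$ for all $x\in\Delta(R)$ and $v\in U(R)$, I want to evaluate something like $1-q^n v$ where $q^n v$ equals $1$. This needs $c^n$ to be a unit, which is circular. So instead I would try the left-sided relation: $e':=c^nq^n$ is an idempotent, and
\[
q^n e'=q^n c^n q^n=q^n\in\Delta(R).
\]
The aim is to show that $1-e'$, or the idempotent $1-c^nq^n$, lies in $\sqrt{\Delta(R)}$ or is a unit, forcing $c^nq^n=1$, so that $q^n$ and hence $q$ is invertible. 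That contradicts Remark~\ref{local}(4), since $q\in\sqrt{\Delta(R)}\cap U(R)$.

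If this route stalls, I would use the standard non-Dedekind-finite device instead. With matrix units $e_{ij}=b^{i}(1-ba)a^{j}$, the element $x=b+(1-ba)$, or a similar perturbation, is a non-unit that behaves badly. Writing $1-ba=vp$ with $v\in U(R)$ and $p\in\sqrt{\Delta(R)}$ (the idempotent $1-ba$ is not a unit), I would exploit that $1-p$ is invertible and that $p$ is conjugate-stable (Lemma~\ref{property of J sharp}(10)) to reach a contradiction. Producing the right non-unit element is the delicate step I expect to have to fight with.
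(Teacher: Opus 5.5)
Your part (1) is correct and is essentially the paper's argument. The paper applies Lemma~\ref{property of J sharp}(8) to make $1-e$ a unit idempotent; you run the same commutation computation and finish with Remark~\ref{local}(2).

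Part (2), however, has a genuine gap: neither of your routes reaches a contradiction. In the first route, the decisive claim is that the idempotent $1-c^nq^n$ lies in $\sqrt{\Delta(R)}$ or is a unit. You state it only as an aim. By Remark~\ref{local}(2) its first alternative is just a restatement of $c^nq^n=1$, and nothing you derived ($q^n\in\Delta(R)$, $q^nc^n=1$, $1-q\in U(R)$) produces it. The second route stops exactly at the step you flag, namely producing the right element.

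The missing idea, which is the paper's argument, is to apply the hypothesis to $b$ as well. Since $b$ is also a non-unit, $b=wp$ with $w\in U(R)$ and $p\in\sqrt{\Delta(R)}$. Take $n$ minimal with $p^n\in\Delta(R)$. Then
\[
p^{n-1}=ab\,p^{n-1}=uq\,(wp^{n})\in\Delta(R),
\]
because $wp^n\in\Delta(R)$ by unit-stability, $q\,\Delta(R)\subseteq\Delta(R)$ by Lemma~\ref{property of J sharp}(6), and $u\,\Delta(R)\subseteq\Delta(R)$. For $n\ge 2$ this contradicts minimality. For $n=1$ it says $1\in\Delta(R)$, which is absurd.

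Alternatively, your first route can be completed using only the decomposition of $a$. It needs a fact you did not prove: $\Delta(R)$ contains no right-invertible element. This is where your matrix-unit idea belongs. Suppose $xy=1$ and $g=1-yx\neq 0$. Using $xg=0=gy$, the elements $g,\ gx,\ yg,\ ygx$ form $2\times 2$ matrix units. So $s=1-g-ygx+gx+yg$ satisfies $s^2=1$, and $sg=yg$ gives $(1-xs)g=g-xyg=0$. Hence $1-xs\notin U(R)$ although $s\in U(R)$, so $x\notin\Delta(R)$. Apply this with $x=q^n$ and $y=c^n$, and use Remark~\ref{local}(4) in the case $c^nq^n=1$; this gives the contradiction. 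As submitted, though, this step is absent, so (2) is not proved.
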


\begin{proof}
(1) Assume \( e \in {\rm Id}(R) \cap C(R) \). Working with Lemma~\ref{property of J sharp}(8), we find that \( 1 - e \in {\rm Id}(R) \cap U(R) \), hence \( e = 0 \), as needed.

(2) Suppose on the opposite that $R$ is not a Dedekind-finite ring. Thus, there exist elements $a, b \in R \setminus U(R)$ such that $ab = 1$ but $ba \neq 1$. Assume that $a = vp$ and $b = uq$ are two respective $U\sqrt{\Delta}$-decompositions for $a$ and $b$. Let $n \in \mathbb{N}$ be the smallest positive integer such that $q^n \in \Delta(R)$. So, because of the relation $U(R)\Delta(R) \subseteq \Delta(R)$ and with the help of Lemma \ref{property of J sharp}(6), we obtain
\[
q^{n-1} = abq^{n-1} = vpuq^n \in \Delta(R),
\]
which contradicts the minimality of $n \in \mathbb{N}$, as wanted.
\end{proof}


Our next helpful technical claims are the following.

\begin{lemma}\label{facto}
Let \( R \) be a ring, and let \( I \) be an ideal of $R$. If $R$ is a $U\sqrt{\Delta}$-ring, then $R/I$ is also a $U\sqrt{\Delta}$-ring. Conversely, if \( I \subseteq J(R) \) and \( R/I \) is a $U\sqrt{\Delta}$-ring, then $R$ is a $U\sqrt{\Delta}$-ring.
\end{lemma}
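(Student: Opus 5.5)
The plan is to prove the two implications separately. The converse, with $I\subseteq J(R)$, is essentially formal given Lemma~\ref{property of J sharp}(4) and (7). The forward direction, for an arbitrary ideal $I$, is where the real work lies.

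\emph{Converse ($I\subseteq J(R)$ and $R/I$ is $U\sqrt{\Delta}$).}
1. Let $a\in R\setminus U(R)$. Because $I\subseteq J(R)$, units lift modulo $I$: if $\bar a$ were a unit of $R/I$, then $a$ would be a unit of $R$. Hence $\bar a$ is a non-unit of $R/I$.
2. By hypothesis, $\bar a=\bar u\,\bar b$ with $\bar u\in U(R/I)$ and $\bar b\in\sqrt{\Delta(R/I)}$.
3. Any lift $u$ of $\bar u$ is a unit of $R$, again because $I\subseteq J(R)$.
4. By Lemma~\ref{property of J sharp}(4), $\sqrt{\Delta(R/I)}=\sqrt{\Delta(R)}/I$, so we may choose the lift $b$ of $\bar b$ to lie in $\sqrt{\Delta(R)}$.
5. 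Then $a=ub+i$ for some $i\in I$, so
\[ a=u\,(b+u^{-1}i). \]
6. Here $u^{-1}i\in J(R)\subseteq\Delta(R)$. By Lemma~\ref{property of J sharp}(7), $b+u^{-1}i\in\sqrt{\Delta(R)}$, which is the required decomposition of $a$.

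\emph{Forward direction ($R$ is $U\sqrt{\Delta}$, $I$ arbitrary).}
1. Let $\bar a$ be a non-unit of $R/I$. Then $a$ is a non-unit of $R$, since a unit of $R$ maps to a unit of $R/I$.
2. Write $a=ub$ with $u\in U(R)$ and $b^n\in\Delta(R)$. Then $\bar a=\bar u\,\bar b$ with $\bar u$ a unit of $R/I$.
3. It remains to show $\bar b^{\,n}\in\Delta(R/I)$, i.e.\ $1-\bar b^{\,n}\bar v\in U(R/I)$ for every $\bar v\in U(R/I)$.
4. If $\bar v$ has a lift $v\in U(R)$, this is immediate: $1-b^nv\in U(R)$ because $b^n\in\Delta(R)$, and its image is a unit.

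\emph{Main obstacle.} The expected difficulty is that units of $R/I$ need not lift to units of $R$ when $I\not\subseteq J(R)$. The first idea is to use the $U\sqrt{\Delta}$ property of $R$ itself on a non-invertible lift $v$ of $\bar v$, writing $v=wq$ with $w\in U(R)$ and $q\in\sqrt{\Delta(R)}$.
1. Then $\bar q$ is a unit of $R/I$.
2. Pick $m$ with $q^m\in\Delta(R)$. By Lemma~\ref{property of J sharp}(6), $q^m\Delta(R)\subseteq\Delta(R)$ and $\sqrt{\Delta(R)}\Delta(R)\subseteq\Delta(R)$.
3. The aim is to rewrite $1-b^nwq$ modulo $I$ as a unit of $R$ times an element of the form $1-(\text{something in }\Delta(R))\cdot(\text{unit})$, for instance by multiplying by a suitable power of $\bar q$, whose inverse exists in $R/I$.

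If this fails, the fallback is to first establish that in a $U\sqrt{\Delta}$-ring every unit of $R/I$ lifts to a unit of $R$, arguing as in the Dedekind-finiteness proof of Proposition~\ref{indecomposable}(2) via a minimal exponent. Even then, the remaining point is to rule out a lift $v=wq$ with $\bar q$ invertible, and this is where the argument is most likely to need an additional idea.
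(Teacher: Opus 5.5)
\textbf{Gap: the forward direction is not proved.} Your converse direction is correct and is exactly the paper's argument: lift the factorization, factor out $u$, and absorb $u^{-1}i\in J(R)\subseteq\Delta(R)$ using Lemma~\ref{property of J sharp}(4),(7). The forward direction, however, is left unfinished. You reduce it to showing $\bar b^{\,n}\in\Delta(R/I)$, but you only handle those $\bar v\in U(R/I)$ that lift to units of $R$, and the non-liftable case stays open. You are right that this step is not automatic. The paper's own proof simply quotes Lemma~\ref{property of J sharp}(4), which is stated only for $I\subseteq J(R)$. Still, the missing idea is short and uses only results already available.

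\textbf{Missing idea: in a $U\sqrt{\Delta}$-ring one has $\Delta(R)=J(R)$.} Let $d\in\Delta(R)$ and $r\in R$.
\begin{itemize}
\item If $r\in U(R)$, then $rd\in\Delta(R)$, because $\Delta(R)$ is stable under multiplication by units.
\item If $r\notin U(R)$, write $r=wq$ with $w\in U(R)$ and $q\in\sqrt{\Delta(R)}$. Lemma~\ref{property of J sharp}(6) gives $qd\in\Delta(R)$, hence $rd=w(qd)\in\Delta(R)$.
\end{itemize}
In both cases $1-rd\in U(R)$. Since this holds for every $r\in R$, we get $d\in J(R)$, and so $\Delta(R)=J(R)$.

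\textbf{How this finishes your argument.} In your step 2 you therefore have $b^n\in J(R)$. The image of $J(R)$ lies in $J(R/I)\subseteq\Delta(R/I)$, so $\bar b^{\,n}\in\Delta(R/I)$ immediately, with no lifting of units needed. Your instinct to apply the $U\sqrt{\Delta}$-decomposition to an arbitrary element was sound. It should be applied to the multiplier $r$ in $1-rd$, to upgrade $\Delta(R)$ to $J(R)$, rather than to lifts of units of $R/I$.

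This also settles your fallback. For a proper ideal $I$, a lift $v=wq$ with $\bar q$ invertible cannot occur: we would have $\bar q^{\,m}\in J(R/I)$ invertible, forcing $R/I=0$. Hence every unit of $R/I$ does lift to a unit of $R$.
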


\begin{proof}
Assume \( \bar a=a + I \notin U(R/I) \). Thus, \( a \notin U(R) \). Therefore, there exist \( u \in U(R) \) and \( q \in \sqrt{\Delta(R)} \) such that \( a = uq \).  Evidently, \( \bar{u} \in U(R/I) \) and applying Lemma~\ref{property of J sharp}(4), we see that $\bar q \in \sqrt{\Delta(R/I)}$ and \(\bar{a} = \bar{u} \bar{q} \). Therefore, $R/I$ is a $U\sqrt{\Delta}$-ring as well.

Reciprocally, suppose that \( I \subseteq J(R) \), \( R/I \) is a $U\sqrt{\Delta}$-ring and \( a \notin U(R) \). Then, \( \bar{a} \notin U(R/I) \), so there exist \( \bar{q} \in \sqrt{\Delta(R/I)} \) and \( \bar{u} \in U(R/I) \) such that \( \bar{a} = \bar{u}\bar{q} \). Hence, \( a - uq \in I \subseteq J(R) \), so that there exists some \( j \in J(R) \) with \( a = uq + j = u(q + u^{-1}j)\). Apparently, \( u \in U(R) \), and Lemma \ref{property of J sharp}(4) applies to get that \( q \in \sqrt{\Delta(R)} \). Now, Lemma~\ref{property of J sharp}(7) is applicable to write that \( q + u^{-1}j \in \sqrt{\Delta(R)} \), and so $a\in U\sqrt{\Delta}(R)$, as desired.
\end{proof}


\begin{proposition}\label{center UJ}
Let $R$ be a $U\sqrt{\Delta}$-ring. Then, $C(R)$ is a local ring. In particular, if $R$ is commutative, then $R$ is a $U\sqrt{\Delta}$-ring if and only if $R$ is a local ring.
\end{proposition}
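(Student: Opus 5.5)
To show that $C(R)$ is local, I would check that for every $c\in C(R)$, either $c$ or $1-c$ is a unit of $C(R)$. Note first that a central element that is a unit of $R$ has its inverse central, so $U(C(R))=U(R)\cap C(R)$. It therefore suffices to prove the following: if $c\in C(R)$ and $c\notin U(R)$, then $1-c\in U(R)$.

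This is exactly Lemma~\ref{property of J sharp}(8). Since $R$ is a $U\sqrt{\Delta}$-ring and $c$ is a non-unit, we have $c\in U\sqrt{\Delta}(R)\cap C(R)$, and (8) gives $1-c\in U(R)$. Being central, $1-c$ is then a unit of $C(R)$.

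The argument behind (8) is short. Write $c=uq$ with $u\in U(R)$ and $q\in\sqrt{\Delta(R)}$. Centrality of $c$ gives $qu=u^{-1}cu=c=uq$, so $u$ and $q$ commute. By (1), $c\in\sqrt{\Delta(R)}$, and then (2) yields $1-c\in U(R)$.

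It follows that the non-units of $C(R)$ are closed under the operation $c\mapsto 1-c$ landing in the units. Hence, for any $a,b\in C(R)$ with $a+b=1$, one of $a$ or $b$ is a unit of $C(R)$. This is the standard characterization of a local ring (for commutative rings, ``$c$ or $1-c$ is a unit for all $c$''), so $C(R)$ is local.

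For the ``in particular'' part, assume $R$ is commutative. If $R$ is $U\sqrt{\Delta}$, then $R=C(R)$ is local by what was just shown. Conversely, if $R$ is local, then $J(R)$ is the set of non-units, and $J(R)\subseteq\Delta(R)\subseteq\sqrt{\Delta(R)}$. So every non-unit $a$ can be written as $a=1\cdot a$ with $a\in\sqrt{\Delta(R)}$, and $R$ is a $U\sqrt{\Delta}$-ring. (Remark~\ref{local}(3) expresses the same fact.)

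I do not expect a real obstacle here. The one point to state carefully is that $U(C(R))=U(R)\cap C(R)$, which holds because the inverse of a central unit is central. This ensures that ``$1-c\in U(R)$'' genuinely means ``$1-c\in U(C(R))$''.
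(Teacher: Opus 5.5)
Your proposal is correct and takes essentially the same approach as the paper. A non-unit $c$ of $C(R)$ is a non-unit of $R$, so Lemma~\ref{property of J sharp}(8) gives $1-c\in U(R)\cap C(R)\subseteq U(C(R))$. You also write out the converse for commutative $R$ (local implies $U\sqrt{\Delta}$ since $J(R)\subseteq\sqrt{\Delta(R)}$), which the paper leaves implicit.
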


\begin{proof}
Assume that $a\in C(R)$ with $a\not\in U(C(R))$. Thus, $a$ is a non-unit element of $R$, and hence $a\in C(R)\cap U\sqrt{\Delta}(R)$. In accordance with Lemma~\ref{property of J sharp}(8), we write
\[
1-a\in U(R)\cap C(R)\subseteq U(C(R)).
\]
Therefore, $C(R)$ is a local ring, as stated.
\end{proof}


Some more concrete exhibitions on the established above assertions are these:

\begin{example}
$\mathbb{Z}_n$ is a $U\sqrt{\Delta}$-ring if and only if $n$ is a power of a prime number.
\end{example}

\begin{example}
Let $R = \mathbb{Z}_4 \times \mathbb{Z}_4$. Although $\mathbb{Z}_4$ is a $U\sqrt{\Delta}$-ring, the ring $R$ is itself {\it not} $U\sqrt{\Delta}$. Indeed, a simple technical check shows that the non-unit element $(2,3)$ cannot be expressed as the product of a unit and an element from $\sqrt{\Delta(R)}$.
\end{example}

\begin{remark}
(1) In general, the triangular matrix rings are {\it not} $U\sqrt{\Delta}$-rings; in fact, for instance, after a plain inspection $T_2(\mathbb{Z}_2)$ manifestly does {\it not} satisfy this property.

(2) A subring of a $U\sqrt{\Delta}$-ring need {\it not} preserve the same property. For example, while $M_2(\mathbb{Z}_2)$ is a $U\sqrt{\Delta}$-ring in conjunction with Corollary \ref{mm} listed below, its subring $T_2(\mathbb{Z}_2)$ is surely not.
\end{remark}

Let $R$ be a ring and $M$ a bi-module over $R$. The trivial extension of $R$ and $M$ is defined as
\[ T(R, M) = \{(r, m) : r \in R \text{ and } m \in M\}, \]
with addition defined componentwise and multiplication defined by
\[ (r, m)(s, n) = (rs, rn + ms). \]
Note that the trivial extension $T(R, M)$ is definitely isomorphic to the subring
\[ \left\{ \begin{pmatrix} r & m \\ 0 & r \end{pmatrix} : r \in R \text{ and } m \in M \right\} \]
of the formal $2 \times 2$ matrix ring $\begin{pmatrix} R & M \\ 0 & R \end{pmatrix}$, and also $T(R, R) \cong R[x]/(x^2)$. We, likewise, notice that the set of units of the trivial extension $T(R, M)$ is
\[ U(T(R, M)) = T(U(R), M). \]
Moreover, referring to \cite{kara}, we can write
\[ \Delta(T(R, M)) = T(\Delta(R), M). \]

\medskip

We are now in a position to prove the following.

\begin{proposition}\label{4.5}
Suppose $R$ is a ring and $M$ is a bi-module over $R$. Then, the following items are true:
	
(1) The trivial extension $T(R, M)$ is a $U\sqrt{\Delta}$-ring if and only if $R$ is a $U\sqrt{\Delta}$-ring.
	
(2) For $n \geq 2$, the quotient-ring $R[x; \alpha]/(x^n)$ is a $U\sqrt{\Delta}$-ring if and only if $R$ is a $U\sqrt{\Delta}$-ring.
	
(3) For $n \geq 2$, the quotient-ring $R[[x; \alpha]]/(x^n)$ is a $U\sqrt{\Delta}$-ring if and only if $R$ is a $U\sqrt{\Delta}$-ring.
\end{proposition}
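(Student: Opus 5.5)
The plan is to reduce all three items to Lemma~\ref{facto}, which says that the $U\sqrt{\Delta}$ property passes to quotients and lifts modulo ideals contained in the Jacobson radical.

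\textbf{Item (1).} Let $I:=T(0,M)=\{(0,m): m\in M\}$.
\begin{itemize}
\item $I$ is a two-sided ideal of $T(R,M)$ with $I^2=0$, so $I$ is nilpotent and hence $I\subseteq J(T(R,M))$.
\item The projection $(r,m)\mapsto r$ is a surjective ring homomorphism with kernel $I$, so $T(R,M)/I\cong R$.
\end{itemize}
Applying Lemma~\ref{facto} in both directions gives the equivalence: if $T(R,M)$ is $U\sqrt{\Delta}$, so is its quotient $R$; conversely, if $R$ is $U\sqrt{\Delta}$, the property lifts because $I\subseteq J(T(R,M))$. Alternatively one can compute directly, using $U(T(R,M))=T(U(R),M)$ and $\Delta(T(R,M))=T(\Delta(R),M)$. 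The latter gives $\sqrt{\Delta(T(R,M))}=T(\sqrt{\Delta(R)},M)$, since $(r,m)^n=(r^n,\ast)$. A decomposition $r=uq$ then lifts as $(r,m)=(u,0)(q,u^{-1}m)$. The quotient argument is cleaner, so I will use it.

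\textbf{Items (2) and (3).} Set $S=R[x;\alpha]/(x^n)$, respectively $R[[x;\alpha]]/(x^n)$, and let $I=(x)/(x^n)$ be the ideal generated by the image of $x$.
\begin{itemize}
\item Since $xr=\alpha(r)x$, every element of $I$ has the form $\sum_{i\ge1} r_i x^i$, and any product of $n$ such elements lies in $(x^n)$. So $I^n=0$ and $I\subseteq J(S)$.
\item The map sending a (truncated) polynomial or series to its constant term is a surjective ring homomorphism $S\to R$, because the constant term of a product is the product of the constant terms. Its kernel is exactly $I$, so $S/I\cong R$.
\end{itemize}
Lemma~\ref{facto} again yields both directions.

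The only points needing care are checking, in the skew setting, that $(x)$ is a genuine two-sided ideal and that $I$ is nilpotent. These hold because $\alpha$ is an endomorphism: the left and right multiples of $x$ coincide as sets up to the twist, and the degree in $x$ is additive under multiplication. I do not expect a real obstacle here.
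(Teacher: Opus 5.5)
Your proposal is correct and matches the paper's proof: in each case you take the nilpotent ideal ($T(0,M)$, resp. $(x)/(x^n)$), note that it lies in the Jacobson radical with quotient isomorphic to $R$, and apply Lemma~\ref{facto} in both directions. The only cosmetic difference is in (3): the paper invokes the isomorphism $R[x;\alpha]/(x^n)\cong R[[x;\alpha]]/(x^n)$ and reduces to item (2), whereas you rerun the same argument directly, which is equally valid.
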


\begin{proof}
(1) Put $A:=T(R, M)$, and consider $I:=T(0, M)$. It is not too difficult to see that $I\subseteq J(A)$ such that $A/I \cong R$. So, the statement follows directly from Proposition~\ref{facto}.
	
(2) Put $A:=R[x; \alpha]/(x^n)$. Considering the ideal $I:=(x)/(x^n)$ of $A$, we verify that $I\subseteq J(A)$ with $A/I \cong R$. So, the statement follows immediately from Proposition~\ref{facto}.
	
(3) Knowing that the isomorphism $$R[x; \alpha]/(x^n) \cong R[[x; \alpha]]/(x^n)$$ is fulfilled, the statement follows at once from point (2).
\end{proof}

As an immediate consequence, we yield:

\begin{corollary}\label{4.6}
Let $R$ be a ring. Then, the following are equivalent:
	
(1) $R$ is a $U\sqrt{\Delta}$-ring.
	
(2) For $n \geq 2$, the ring $R[x]/(x^n)$ is a $U\sqrt{\Delta}$-ring.
	
(3) For $n \geq 2$, the ring $R[[x]]/(x^n)$ is a $U\sqrt{\Delta}$-ring.
\end{corollary}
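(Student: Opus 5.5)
The corollary is a direct specialization of Proposition~\ref{4.5}, so the plan is to reduce each equivalence to the matching item there, taking $\alpha$ to be the identity endomorphism of $R$. With $\alpha=\mathrm{id}_R$, the skew polynomial ring $R[x;\alpha]$ is just $R[x]$, and the skew power series ring $R[[x;\alpha]]$ is just $R[[x]]$. Hence $R[x;\alpha]/(x^n)=R[x]/(x^n)$ and $R[[x;\alpha]]/(x^n)=R[[x]]/(x^n)$ for every $n\ge 2$.

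For (1)$\Leftrightarrow$(2), we invoke Proposition~\ref{4.5}(2) with $\alpha=\mathrm{id}_R$. For (1)$\Leftrightarrow$(3), we invoke Proposition~\ref{4.5}(3) in the same way. Alternatively, (2)$\Leftrightarrow$(3) follows from the isomorphism $R[x]/(x^n)\cong R[[x]]/(x^n)$, since both rings are free $R$-modules on $1,x,\dots,x^{n-1}$ with the same truncated multiplication.

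For a self-contained check, we can also argue directly from Lemma~\ref{facto}. Let $A=R[x]/(x^n)$ and $I=(x)/(x^n)$. The ideal $I$ is nilpotent, since $I^n=0$, so $I\subseteq J(A)$; moreover $A/I\cong R$. Lemma~\ref{facto} then gives that $A$ is a $U\sqrt{\Delta}$-ring if and only if $R$ is. The same argument works verbatim for $R[[x]]/(x^n)$.

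There is no real obstacle here. The only point to verify is that $I\subseteq J(A)$, and this holds because $I$ is a nilpotent ideal.
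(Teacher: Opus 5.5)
Your proposal is correct and follows the paper's route. The corollary is the case $\alpha=\mathrm{id}_R$ of Proposition~\ref{4.5}, whose proof is exactly your direct argument: apply Lemma~\ref{facto} to the nilpotent ideal $I=(x)/(x^n)\subseteq J(A)$ with $A/I\cong R$, and use the isomorphism $R[x]/(x^n)\cong R[[x]]/(x^n)$ for item (3).
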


Suppose $R$ is a ring and $M$ is a bi-module over $R$. Putting $$DT(R,M) := \{ (a, m, b, n) | a, b \in R, m, n \in M \}$$ with addition defined componentwise and multiplication defined by $$(a_1, m_1, b_1, n_1)(a_2, m_2, b_2, n_2) = (a_1a_2, a_1m_2 + m_1a_2, a_1b_2 + b_1a_2, a_1n_2 + m_1b_2 + b_1m_2 +n_1a_2),$$ we then see that $DT(R,M)$ is a ring which is isomorphic to $T(T(R, M),  T(R, M))$.

Besides, we have $$DT(R, M) =
\left\{\begin{pmatrix}
	a &m &b &n\\
	0 &a &0 &b\\
	0 &0 &a &m\\
	0 &0 &0 &a
\end{pmatrix} |  a,b \in R, m,n \in M\right\}.$$ We now establish the following isomorphism of rings as follows: the map $$R[x, y]/(x^2, y^2) \rightarrow DT(R, R)$$ is defined by $$a + bx + cy + dxy \mapsto
\begin{pmatrix}
	a &b &c &d\\
	0 &a &0 &c\\
	0 &0 &a &b\\
	0 &0 &0 &a
\end{pmatrix},$$
and it is easy to check that it is a ring isomorphism.

\medskip

Two direct consequences are the following.

\begin{corollary}
Let $R$ be a ring and $M$ a bi-module over $R$. Then, the following assertions are equivalent:
	
(1) $R$ is a $U\sqrt{\Delta}$-ring ring.
	
(2) $DT(R, M)$ is a $U\sqrt{\Delta}$-ring.
	
(3) $DT(R, R)$ is a $U\sqrt{\Delta}$-ring.
	
(4) $R[x, y]/(x^2, y^2)$ is a $U\sqrt{\Delta}$-ring.
\end{corollary}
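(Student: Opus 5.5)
The plan is to run all four equivalences through Proposition~\ref{4.5}(1), which says that $T(S,N)$ is a $U\sqrt{\Delta}$-ring exactly when $S$ is. We combine it with the identification $DT(R,M)\cong T(T(R,M),T(R,M))$ and the explicit isomorphism $R[x,y]/(x^2,y^2)\cong DT(R,R)$ recorded just before the statement.

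For (1)$\Leftrightarrow$(2), apply Proposition~\ref{4.5}(1) twice.
\begin{itemize}
\item First take $S=T(R,M)$ and $N=T(R,M)$, regarded as a bimodule over itself. This gives that $DT(R,M)\cong T(T(R,M),T(R,M))$ is $U\sqrt{\Delta}$ if and only if $T(R,M)$ is.
\item Then take $S=R$ and $N=M$. This gives that $T(R,M)$ is $U\sqrt{\Delta}$ if and only if $R$ is.
\end{itemize}
The $U\sqrt{\Delta}$ property is defined purely in terms of units, $\Delta$ and multiplication. It is therefore invariant under ring isomorphism, so the identification $DT(R,M)\cong T(T(R,M),T(R,M))$ may be used freely.

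The remaining equivalences are immediate. Specializing $M=R$ gives (1)$\Leftrightarrow$(3). The stated ring isomorphism $R[x,y]/(x^2,y^2)\cong DT(R,R)$ gives (3)$\Leftrightarrow$(4).

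The only point needing care is the isomorphism $DT(R,M)\cong T(T(R,M),T(R,M))$. The paper asserts it, but it is worth confirming explicitly. Send $(a,m,b,n)$ to the pair $\bigl((a,m),(b,n)\bigr)$. The product of two such pairs in the trivial extension has first coordinate $(a_1,m_1)(a_2,m_2)=(a_1a_2,\,a_1m_2+m_1a_2)$. Its second coordinate is
\[
(a_1,m_1)(b_2,n_2)+(b_1,n_1)(a_2,m_2)=(a_1b_2+b_1a_2,\; a_1n_2+m_1b_2+b_1m_2+n_1a_2).
\]
This matches the defining multiplication of $DT(R,M)$ exactly. Once this routine check is done, no further obstacle remains.
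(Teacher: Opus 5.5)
Your proposal is correct and is exactly the argument the paper intends. The paper states this corollary as a direct consequence without writing out a proof, and your route is the same: two applications of Proposition~\ref{4.5}(1) via $DT(R,M)\cong T(T(R,M),T(R,M))$, then the specialization $M=R$ and the isomorphism $R[x,y]/(x^2,y^2)\cong DT(R,R)$. Your explicit check that $(a,m,b,n)\mapsto\bigl((a,m),(b,n)\bigr)$ is multiplicative supplies a verification the paper only asserts.
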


Assuming that
$$
L_n(R) := \left\{
\begin{pmatrix}
	0 & \cdots & 0 & a_1 \\
	0 & \cdots & 0 & a_2 \\
	\vdots & \ddots & \vdots & \vdots \\
	0 & \cdots & 0 & a_n
\end{pmatrix}
\in {\rm T}_n(R) \,\mid|\, a_1,\dots,a_n \in R \right\}
$$
and
$$
S_n(R) := \left\{(a_{ij}) \in {\rm T}_n(R) \mid a_{11} = \cdots = a_{nn} \right\},
$$
it is not so hard to verify that the mapping $\varphi: S_n(R) \to T(S_{n-1}(R), L_{n-1}(R))$, defined by
$$
\begin{pmatrix}
	a_{11} & a_{12} & \cdots & a_{1n} \\
	0 & a_{11} & \cdots & a_{2n} \\
	\vdots & \vdots & \ddots & \vdots \\
	0 & 0 & \cdots & a_{11}
\end{pmatrix}
\mapsto
\begin{pmatrix}
	a_{11} & a_{12} & \cdots & a_{1,n-1} & 0 & \cdots & 0 & a_{1n} \\
	0 & a_{11} & \cdots & a_{2,n-1} & 0 & \cdots & 0 & a_{2n} \\
	\vdots & \vdots & \ddots & \vdots & \vdots & \ddots & \vdots & \vdots \\
	0 & 0 & \cdots & a_{11} & 0 & \cdots & 0 & a_{n-1,n} \\
	0 & 0 & \cdots & 0 & a_{11} & a_{12} & \cdots & a_{1,n-1} \\
	0 & 0 & \cdots & 0 & 0 & a_{11} & \cdots & a_{2,n-1} \\
	\vdots & \vdots & \ddots & \vdots & \vdots & \vdots & \ddots & \vdots \\
	0 & 0 & \cdots & 0 & 0 & 0 & \cdots & a_{11}
\end{pmatrix},$$
illustrates that $$S_n(R) \cong T(S_{n-1}(R), L_{n-1}(R)).$$

This isomorphism serves as a key tool for studying the ring $S_n(R)$, because by examining the trivial extension and employing induction on $n$, we can substantially generalize the results to $S_n(R)$.


\begin{corollary}
For $n \ge 2$, $S_n(R)$ is a a $U\sqrt{\Delta}$-ring if and only if $R$ is a $U\sqrt{\Delta}$-ring.
\end{corollary}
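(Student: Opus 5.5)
The plan is to induct on $n\ge 2$, using the isomorphism $S_n(R)\cong T(S_{n-1}(R),L_{n-1}(R))$ displayed just above together with Proposition~\ref{4.5}(1). Proposition~\ref{4.5}(1) says that a trivial extension $T(A,M)$ is a $U\sqrt{\Delta}$-ring if and only if $A$ is. Applied with $A=S_{n-1}(R)$ and $M=L_{n-1}(R)$, this gives
\[
S_n(R)\ \text{is } U\sqrt{\Delta} \iff S_{n-1}(R)\ \text{is } U\sqrt{\Delta}.
\]
Iterating down to $n-1=1$ yields the claim, since $S_1(R)=R$.

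For the base case $n=2$, note that $S_2(R)=\left\{\begin{pmatrix} a & b\\ 0 & a\end{pmatrix}\right\}\cong T(R,R)$. Proposition~\ref{4.5}(1), or Corollary~\ref{4.6} through $T(R,R)\cong R[x]/(x^2)$, then gives the equivalence directly.

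The main obstacle is checking that $L_{n-1}(R)$ really is an $S_{n-1}(R)$-bimodule, and that the displayed map $\varphi$ is a ring isomorphism; the paper asserts both. Since Proposition~\ref{4.5}(1) holds for an arbitrary bimodule $M$, no further property of $L_{n-1}(R)$ is needed.

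There is also a more direct route that avoids relying on the isomorphism. Let $I\subseteq S_n(R)$ be the ideal of strictly upper triangular matrices. Then $I$ is nilpotent, so $I\subseteq J(S_n(R))$, and $S_n(R)/I\cong R$ via the diagonal entry $a_{11}$. Lemma~\ref{facto} then gives both directions at once. I would present this route as the main argument and mention the induction as the paper's intended framework.
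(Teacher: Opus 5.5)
Your proposal is correct. The induction you sketch first is exactly the paper's intended argument. The paper writes no separate proof, only the remark that the isomorphism $S_n(R)\cong T(S_{n-1}(R),L_{n-1}(R))$, together with Proposition~\ref{4.5}(1) and induction on $n$, gives the result. The checks you flag do go through. For $B\in S_{n-1}(R)$ and $C\in L_{n-1}(R)$, $BC$ is again supported on the last column. Also $CB=Cb_{11}$, because the last row of $B$ is $(0,\dots,0,b_{11})$, and this is what makes $\varphi$ multiplicative.

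Your preferred direct route flattens the same mechanism rather than introducing a new idea. Proposition~\ref{4.5}(1) is itself just Lemma~\ref{facto} applied to the square-zero ideal $T(0,M)$. So the paper's induction performs $n-1$ successive quotients $S_n(R)\to S_{n-1}(R)\to\cdots\to S_1(R)=R$ by square-zero ideals. The kernel of the composite is precisely your ideal $I$ of strictly upper triangular matrices, with $I^n=0$. One application of Lemma~\ref{facto} to $I$ therefore replaces the whole induction and spares you verifying $\varphi$ and the bimodule structure; for this corollary it is the cleaner proof. What the paper's route buys instead is a reusable structural isomorphism. It transfers to $S_n(R)$ any property known to pass between $A$ and $T(A,M)$, which is the role the authors assign to it.
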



We now continue our work with results on some polynomial ring extensions.

\begin{example}\label{laurent}
For any ring $R$, both the Laurent polynomial ring $R[x, x^{-1}]$ and the polynomial ring $R[x]$ are {\it not} $U\sqrt{\Delta}$-ring.
\end{example}

\begin{proof}
Suppose on the reverse that $R[x]$ is a $U\sqrt{\Delta}$-ring. Since $x \not\in U(R[x])$ and it is a central element, Lemma~\ref{property of J sharp}(8) enables us to write that $1-x \in U(R[x])$, a contradiction.

Now, suppose on the reciprocality that $R[x, x^{-1}]$ is a $U\sqrt{\Delta}$-ring. Since $1+x+x^2 \not\in U(R[x, x^{-1}])$ and it is a central element, one writes that $$-x(1+x)=1-(1+x+x^2) \in U(R[x, x^{-1}]),$$ thus enabling that $1+x \in U(R[x, x^{-1}])$, which is an obvious contradiction.
\end{proof}

We are now ready to prove the following slightly unexpected equivalence.

\begin{proposition}\label{3.2}
A ring $R[[x; \alpha]]$ is $U\sqrt{\Delta}$ if and only if so is $R$.
\end{proposition}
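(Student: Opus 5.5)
The plan is to reduce to Lemma~\ref{facto}. Put $A:=R[[x;\alpha]]$, where $\alpha$ is an endomorphism of $R$ as in Proposition~\ref{4.5}, and let $I:=(x)=xA$ be the set of power series with zero constant term. The first step is to check that $I$ is a two-sided ideal of $A$. This holds because $xr=\alpha(r)x$, so $AxA=xA=Ax$ consists exactly of the series with zero constant term. We then have $A/I\cong R$ via $f\mapsto f(0)$.

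The second step is to show $I\subseteq J(A)$. It suffices to show that $1-g$ is a unit for every $g\in I$, since $I$ is an ideal. Take $g=xh$; the formal series $\sum_{k\ge0}g^k$ makes sense because $g^k$ has order at least $k$ in $x$, and it inverts $1-g$ on both sides. The same argument shows that $f\in A$ is a unit if and only if $f(0)\in U(R)$. Indeed, if $f(0)$ is a unit, then $f=f(0)(1+f(0)^{-1}(f-f(0)))$ with the second factor in $1+I$; conversely, a unit of $A$ maps to a unit of $A/I\cong R$.

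The third step applies both halves of Lemma~\ref{facto} with this $I$. If $A$ is $U\sqrt{\Delta}$, then $A/I\cong R$ is $U\sqrt{\Delta}$. Conversely, if $R\cong A/I$ is $U\sqrt{\Delta}$, then $I\subseteq J(A)$ forces $A$ to be $U\sqrt{\Delta}$. We should note that the $U\sqrt{\Delta}$ property is invariant under ring isomorphism, which is clear.

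The only real point of care is the second step in the skew setting. There one must check that powers of $xh$ genuinely raise the $x$-adic order, so that the geometric series converges formally. This follows from $xh\cdot xh'=x\,\alpha(h)\,x\,h'$-type rearrangements, which give $(xh)^k\in x^kA$. This is the step I expect to need the most attention, though it is standard. Everything else is a direct citation of Lemma~\ref{facto} together with Lemma~\ref{property of J sharp}(4), which is already built into that lemma.
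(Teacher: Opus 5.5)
Your proposal is correct and follows the paper's proof: take $I$ to be the ideal of series with zero constant term, note $I\subseteq J(R[[x;\alpha]])$ and $R[[x;\alpha]]/I\cong R$, and apply both directions of Lemma~\ref{facto}. The only difference is that you prove $I\subseteq J$ directly by a geometric series, whereas the paper quotes $J(R[[x;\alpha]])=J(R)+I$. There is one small slip to fix. Under your convention $xr=\alpha(r)x$ you have $xA=\{\sum_{i\ge0}\alpha(a_i)x^{i+1}\}$, so the zero-constant-term ideal is $Ax$, i.e.\ the kernel of $f\mapsto f(0)$. This differs from $xA$ unless $\alpha$ is surjective. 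So justify two-sidedness via that kernel rather than via $xA=Ax$. Also note that the geometric-series argument applies to every $g$ of positive $x$-adic order, not just to elements of the form $xh$.
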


\begin{proof}
Setting $I:= R[[x; \alpha]]x$, we see that $I$ is an ideal of $R[[x; \alpha]]$. Noticing that the equality $J(R[[x; \alpha]])=J(R)+I$ is valid, it must be that $I\subseteq J(R[[x; \alpha]])$. And since $R[[x; \alpha]]/I\cong R$ holds, we apply Lemma \ref{facto} to conclude the claim, as asserted.
\end{proof}

A quick consequence is the following.

\begin{corollary}\label{power}
A ring $R[[x]]$ is $U\sqrt{\Delta}$ if, and only if, so is $R$.
\end{corollary}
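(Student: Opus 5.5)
This is the special case $\alpha = \mathrm{id}_R$ of Proposition~\ref{3.2}, since then $R[[x;\alpha]] = R[[x]]$. So the plan is to specialize that proposition. To keep the argument self-contained, we also retrace its mechanism directly for $R[[x]]$.

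Set $S := R[[x]]$ and $I := xS$.

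First, $I$ is a two-sided ideal, because $x$ is central in $S$. Evaluation at $x=0$ gives a surjective ring homomorphism $S \to R$ with kernel exactly $I$, so $S/I \cong R$.

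Next, we check $I \subseteq J(S)$ via the standard description of units in a power series ring: $f \in S$ is a unit if and only if its constant term is a unit of $R$. For $g \in I$ and any $h \in S$, the element $1 - hg$ has constant term $1$, so it is invertible; its inverse is $\sum_{k \ge 0} (hg)^k$, which converges $x$-adically because $(hg)^k \in x^k S$. Hence every element of $I$ is left quasi-regular against all of $S$, and therefore $I \subseteq J(S)$.

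Finally, Lemma~\ref{facto} applies in both directions:
\begin{itemize}
\item if $S$ is a $U\sqrt{\Delta}$-ring, then so is its quotient $S/I \cong R$;
\item conversely, since $I \subseteq J(S)$, if $S/I \cong R$ is a $U\sqrt{\Delta}$-ring, then so is $S$.
\end{itemize}
Being a $U\sqrt{\Delta}$-ring is plainly invariant under ring isomorphism, so this gives the equivalence.

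There is no real obstacle here. The only step that needs any verification is $I \subseteq J(S)$, and that reduces to the geometric-series inversion of $1 - hg$ described above.
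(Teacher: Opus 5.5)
Your proposal is correct and matches the paper's route: the corollary is the case $\alpha=\mathrm{id}_R$ of Proposition~\ref{3.2}, whose proof takes $I=R[[x;\alpha]]x$, notes $I\subseteq J(R[[x;\alpha]])$ and $R[[x;\alpha]]/I\cong R$, and applies Lemma~\ref{facto} in both directions. The only difference is that you check $I\subseteq J(S)$ directly via the geometric series $\sum_{k\ge 0}(hg)^k$, whereas the paper cites the equality $J(R[[x;\alpha]])=J(R)+I$.
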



Now, according to the above example, the polynomial ring \( R[x] \) is {\it not} a \( U\sqrt{\Delta} \)-ring. However, a natural question arises: {\it What is the explicit form of the elements in \( U\sqrt{\Delta}(R[x]) \)}?  The next result addresses this point for a more general class of rings, namely the skew polynomial ring \( R[x; \alpha] \).

Let \( R \) be a ring and let \( \alpha \) be an endomorphism of \( R \). Recall that \( R \) is called \textit{\(\alpha\)-compatible} provided that, for all \( a, b \in R \), we have $ab = 0$ if and only if $a\alpha(b) = 0$.

\begin{proposition}
Let $R$ be an $\alpha$-compatible ring. Then, $R$ is a weakly $2$-primal ring if and only if
$$U\sqrt{\Delta}(R[x; \alpha]) = U\sqrt{\Delta}(R) + {\rm L}(R)[x; \alpha]x.$$
\end{proposition}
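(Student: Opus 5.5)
We will follow the standard route used for $UN$ and $U\sqrt{J}$ elements in polynomial rings, with two key facts about $\alpha$-compatible rings. (a) If $R$ is $\alpha$-compatible and weakly $2$-primal, then ${\rm Nil}(R[x;\alpha])={\rm Nil}(R)[x;\alpha]={\rm L}(R)[x;\alpha]$, and
$$U(R[x;\alpha])=U(R)+{\rm Nil}(R)[x;\alpha]x.$$
(b) $J(R[x;\alpha])={\rm Nil}(R[x;\alpha])$, so $\Delta(R[x;\alpha])$ is as small as possible. More precisely, we will show
$$\Delta(R[x;\alpha])=\Delta(R)\cap\{\text{elements making constant terms}\}+{\rm L}(R)[x;\alpha]x,$$
in the sense that $f\in\Delta(R[x;\alpha])$ if and only if $f(0)\in\Delta(R)$ and the higher coefficients lie in ${\rm L}(R)$. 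For the $\Leftarrow$ direction, take $f=a_0+g$ with $g\in{\rm L}(R)[x;\alpha]x$ and a unit $u=u_0+h$ with $h\in {\rm L}(R)[x;\alpha]x$. Then $1-fu=(1-a_0u_0)+(\text{element of }{\rm Nil}(R)[x;\alpha])$, whose constant term is a unit; by (a) it is therefore a unit. For the $\Rightarrow$ direction, test $f$ against the units $u\in U(R)$ and $u=1+rx$ with $r$ nilpotent, and compare coefficients via (a). From this we get that $\sqrt{\Delta(R[x;\alpha])}=\sqrt{\Delta(R)}+{\rm L}(R)[x;\alpha]x$. Here ${\rm L}(R)$ being an ideal (nil) and $\alpha$-compatibility are used so that, when expanding $f^n$ modulo the ideal ${\rm L}(R)[x;\alpha]$, only $a_0^n$ survives in the quotient $(R/{\rm L}(R))[x;\bar\alpha]$, which is reduced.

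For the forward implication, assume $R$ is weakly $2$-primal. Take $f=uq\in U\sqrt{\Delta}(R[x;\alpha])$. Writing $u=u_0+h$ and $q=q_0+k$ as above, we get $f=u_0q_0+(\text{terms in }{\rm L}(R)[x;\alpha]x)$. The term $hq_0$ is fine because ${\rm L}(R)$ is an ideal and $\alpha$ preserves ${\rm Nil}(R)$ by compatibility. Hence $f\in U\sqrt{\Delta}(R)+{\rm L}(R)[x;\alpha]x$. Conversely, given $u_0q_0+g$ with $g\in {\rm L}(R)[x;\alpha]x$, we rewrite it as $u_0(q_0+u_0^{-1}g)$. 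By the description of $\sqrt{\Delta(R[x;\alpha])}$ above, the element $q_0+u_0^{-1}g$ lies in $\sqrt{\Delta(R[x;\alpha])}$. Alternatively, this follows from Lemma \ref{property of J sharp}(7) once we know ${\rm L}(R)[x;\alpha]x\subseteq \Delta(R[x;\alpha])$.

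For the reverse implication, assume the displayed equality holds. Let $a\in{\rm Nil}(R)$. Then $ax\in U\sqrt{\Delta}(R[x;\alpha])$, since $ax$ is nilpotent in $R[x;\alpha]$ by compatibility and nilpotent elements lie in $\sqrt{\Delta}$. The equality then gives $ax=b+g$ with $b\in U\sqrt{\Delta}(R)$ and $g\in{\rm L}(R)[x;\alpha]x$. Comparing $x$-coefficients yields $a\in{\rm L}(R)$, so ${\rm Nil}(R)\subseteq{\rm L}(R)$. The other inclusion ${\rm L}(R)\subseteq{\rm Nil}(R)$ always holds, so $R$ is weakly $2$-primal.

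The main obstacle is the forward direction's need for a clean description of $\sqrt{\Delta(R[x;\alpha])}$, specifically that the higher coefficients of any $q\in\sqrt{\Delta(R[x;\alpha])}$ lie in ${\rm L}(R)$. We will attack it by passing to $\bar R=R/{\rm L}(R)$, which is reduced and $\bar\alpha$-compatible (this needs checking, using that ${\rm L}(R)$ is $\alpha$-stable). Over $\bar R$, the units of $\bar R[x;\bar\alpha]$ are constants and $J=0$. The key claim is that $\Delta(\bar R[x;\bar\alpha])\subseteq \bar R$. To prove it, note that if $\bar q^n$ lies in $\Delta$ of the quotient, then $\bar q^n$ is constant, and reducedness forces $\bar q$ to be constant. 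Then lift back using the fact that the kernel ${\rm L}(R)[x;\alpha]$ is nil.
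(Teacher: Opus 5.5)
Your proposal is correct and follows the paper's proof. You use the same chain: first $\Delta(R[x;\alpha])=\Delta(R)+{\rm L}(R)[x;\alpha]x$, then $\sqrt{\Delta(R[x;\alpha])}=\sqrt{\Delta(R)}+{\rm L}(R)[x;\alpha]x$, then factoring out $u_0$ for the $U\sqrt{\Delta}$ equality, and the same $ax$ argument for the converse.

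The only variation is in the $\sqrt{\Delta}$ step. You dispose of the higher coefficients at once in $\bar R=R/{\rm L}(R)$, whereas the paper peels off leading coefficients one at a time. What makes your version work is $\bar\alpha$-rigidity, meaning $\bar R$ is reduced \emph{and} $\bar\alpha$-compatible; reducedness alone would not suffice. The compatibility you flag does hold: in any $\alpha$-compatible ring, moving $\alpha$ across suffixes gives $(a\alpha(b))^k=0 \iff (ab)^k=0$.
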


\begin{proof}
First, assume that $$U\sqrt{\Delta}(R[x; \alpha]) = U\sqrt{\Delta}(R) + {\rm L}(R)[x; \alpha]x$$ is true. If $a^n=0$, then \cite[Lemma~2.1]{chen} allows to write that $a\alpha(a)\cdots\alpha^{n-1}(a)=0$, which shows that $(ax)^n=0$. Therefore, $$ax \in {\rm Nil}(R[x; \alpha]) \subseteq \sqrt{\Delta(R[x; \alpha])} \subseteq U\sqrt{\Delta}(R[x; \alpha]) = U\sqrt{\Delta}(R) + {\rm L}(R)[x; \alpha],$$ and consequently $a \in {\rm L}(R)$ yielding ${\rm Nil}(R)= {\rm L}(R)$, as requested.

Conversely, assume now that $R$ is a weakly $2$-primal ring, and establish three claims.

\medskip

\textbf{Claim (1):} $\Delta(R[x; \alpha]) = \Delta(R) + {\rm L}(R)[x; \alpha]x$.

\medskip

Write $f=\sum_{i=0}^n a_ix^i \in \Delta(R[x; \alpha])$. Then, for $u \in U(R)$, we have $1-uf \in U(R[x; \alpha])$, so \cite[Corollary~2.14]{chen} is a guarantor that $a_0 \in \Delta (R)$ and, for each $1 \leq i \leq n$, $a_i \in {\rm Nil}(R) ={\rm L}(R)$. Thus, $\Delta(R[x; \alpha]) \subseteq \Delta(R) + {\rm L}(R)[x; \alpha]x$.

Conversely, let $f=\sum_{i=0}^n a_ix^i \in \Delta(R) + {\rm L}(R)[x; \alpha]x$ and $U=\sum_{i=0}^n u_ix^i \in U(R[x; \alpha])$. Utilizing \cite[Corollary~2.14]{chen}, $u_0 \in U(R)$ and, for each $1 \leq i \leq n$, $u_i \in {\rm Nil}(R) ={\rm L}(R)$. Since ${\rm Nil}(R)={\rm L}(R)$ is an ideal, it is readily to show that $1-uf \in U(R)+{\rm L}(R)[x; \alpha]x$. So, an application of \cite[Corollary~2.14]{chen} gives that $1-uf \in U(R[x; \alpha])$, which means $f\in \Delta(R[x; \alpha])$.

\medskip

\textbf{Claim (2):} $\sqrt{\Delta(R[x; \alpha])} = \sqrt{\Delta(R)} + {\rm L}(R)[x; \alpha]x$.

\medskip

Write $f=\sum_{i=0}^n a_ix^i \in \sqrt{\Delta(R[x; \alpha])}$. Then, there exists $m \in \mathbb{N}$ such that $f^m \in \Delta(R[x; \alpha])$. By Claim (1), we have $a_0 \in \sqrt{\Delta(R)}$ and $$a_n\alpha^{n}(a_n)\cdots \alpha^{n(m-1)}(a_n) \in {\rm L}(R)={\rm Nil}(R),$$ so with \cite[Lemma~2.1]{chen} in mind $a_n \in {\rm Nil}(R)={\rm L}(R)$.

Now, we show that if $n\geq 2$, then $a_{n-1} \in {\rm L}(R)$. To that goal, set $f:=g-a_nx^n$. Then, $f^m=g^m+q$, where $q \in R[x; \alpha]$. Note that the coefficients of $q$ can be written as sums of monomials in $a_i$ and $\alpha^{k}(a_j)$, where $a_i, a_j \in \{a_0, a_1, \ldots, a_n\}$, $k \geq 0$ is a positive integer, and each monomial contains $a_n$. Since $a_n \in {\rm Nil}(R)$ and ${\rm Nil}(R)$ is an ideal, we conclude that $q \in {\rm Nil}(R)[x; \alpha]x$. Moreover, by Claim (1), ${\rm Nil}(R)[x; \alpha]x \subseteq \Delta(R[x; \alpha])$. Therefore, $g^m \in \Delta(R[x; \alpha])$. Again exploiting Claim (1), we detect that $$a_{n-1}\alpha^{n-1}(a_{n-1})\cdots\alpha^{(n-1)(m-1)}(a_{n-1}) \in {\rm L}(R)={\rm Nil}(R),$$ so an utilization of \cite[Lemma~2.1]{chen} leads to $a_{n-1} \in {\rm Nil}(R)={\rm L}(R)$. Continuing this process, we can show that, for each $1 \leq i \leq n$, $a_i \in {\rm L}(R)$.

Conversely, let $f=\sum_{i=0}^n a_ix^i \in \sqrt{\Delta(R)} + {\rm L}(R)[x; \alpha]x$. Then, there exists $m \in \mathbb{N}$ such that $a_0^m \in \Delta(R)$. A simple calculation shows that $f^m \in \Delta(R[x; \alpha])$.

\medskip

\textbf{Claim (3):} $U\sqrt{\Delta}(R[x; \alpha]) = \sqrt{\Delta}(R) + {\rm L}(R)[x; \alpha]x$.

\medskip

Write $f=\sum_{i=0}^n a_ix^i \in U\sqrt{\Delta}(R[x; \alpha])$. Then, there exist $u=\sum_{i=0}^n u_ix^i \in U(R[x; \alpha])$ and $q=\sum_{i=0}^n q_ix^i \in \sqrt{\Delta(R[x; \alpha])}$ such that $f=uq$. Bearing into account \cite[Corollary~2.14]{chen}, one infers that $u_0 \in U(R)$ and, for each $1 \leq i \leq n$, $u_i \in {\rm Nil}(R) ={\rm L}(R)$. Also, by Claim (2), $q_0 \in \sqrt{\Delta}(R)$ and, for each $1 \leq i \leq n$, $q_i \in {\rm Nil}(R) ={\rm L}(R)$. A plain computation demonstrates that $f\in \sqrt{\Delta}(R) + {\rm L}(R)[x; \alpha]x$.

Conversely, let $f \in \sqrt{\Delta}(R) + {\rm L}(R)[x; \alpha]x$. Then, there exist $u \in U(R)$, $q \in \sqrt{\Delta(R)}$ and, for each $1 \leq i \leq n$, $l_i \in {\rm L}(R)$ such that $f=uq+\sum_{i=1}^{n}l_ix^i$. Factoring out $u$ and using Claim (2), we conclude $$f=u(q+u^{-1}l_1x + \cdots u^{-1}l_nx^n) \in U\sqrt{\Delta}(R[x; \alpha]),$$ as required, thus giving the claimed equality.
\end{proof}

We are now focussing on a next series of technical preliminaries. Before doing that, we say that the rings $R$ for which each non-unit element is uniquely written as a product of a unit from $R$ and an element from $\sqrt{\Delta(R)}$ are {\it uniquely $U\sqrt{\Delta}$}, and those for which each non-unit element is written as a commuting product of a unit from $R$ and an element from $\sqrt{\Delta(R)}$ are {\it strongly $U\sqrt{\Delta}$}. Thereby, we succeed to show validity of the following necessary and sufficient condition.

\begin{lemma}
Let $R$ be a ring. Then,

(1) $R$ is uniquely $U\sqrt{\Delta}$ if and only if $R$ is a division ring.

(2) $R$ is strongly $U\sqrt{\Delta}$ if and only if $R$ is local.
\end{lemma}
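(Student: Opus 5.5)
For (1), the backward direction is immediate. If $R$ is a division ring, then the only non-unit is $0$. By Remark~\ref{local}(4) together with the fact that $\sqrt{\Delta(R)}$ contains $0$, we get $0=u\cdot 0$. The decomposition is unique because any factorization $0=uq$ with $u\in U(R)$ forces $q=0$.

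For the forward direction I would exploit the non-uniqueness that units of the form $1+d$ create. Take any non-unit $a$ with decomposition $a=uq$, where $u\in U(R)$ and $q\in\sqrt{\Delta(R)}$. For any $v\in U(R)$, the element $v^{-1}qv$ lies in $\sqrt{\Delta(R)}$ by Lemma~\ref{property of J sharp}(10), and $a=(uv)(v^{-1}qv)$. Uniqueness gives $uv=u$, hence $v=1$, for every unit $v$. So $U(R)=\{1\}$.

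Next, any nonzero $d\in\sqrt{\Delta(R)}$ would give a unit $1-d\neq 1$ by Lemma~\ref{property of J sharp}(2), and likewise $1+d$ since $-d\in\sqrt{\Delta(R)}$. Therefore $\sqrt{\Delta(R)}=\{0\}$, and every non-unit equals $1\cdot 0=0$. Consequently every nonzero element is a unit, so $R$ is a division ring.

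The subtle point is that the first step only needs a single non-unit $a$, and $0$ is always one. Indeed $0=u\cdot 0$ for every unit $u$, so uniqueness alone already forces $U(R)=\{1\}$. This simplifies the argument.

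For (2), if $R$ is local then $R=U(R)\cup J(R)$ by Remark~\ref{local}(3). Each non-unit $a$ can then be written as $a=1\cdot a$ with $a\in J(R)\subseteq\sqrt{\Delta(R)}$, and $1$ commutes with $a$.

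Conversely, suppose every non-unit $a$ can be written $a=uq$ with $uq=qu$, $u\in U(R)$ and $q\in\sqrt{\Delta(R)}$. Then Lemma~\ref{property of J sharp}(1) gives $a=uq\in\sqrt{\Delta(R)}$. Hence $R=U(R)\cup\sqrt{\Delta(R)}$, and Remark~\ref{local}(3) shows that $R$ is local.

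None of these steps looks like a genuine obstacle. The only step that needs care is choosing the right non-unit and conjugating unit in (1), and taking $a=0$ handles it.
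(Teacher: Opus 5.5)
Your part (2) is correct and is exactly the paper's argument. Part (1), however, is internally inconsistent, and in every reading of ``uniquely'' one of its two directions fails.

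The general forward step is wrong as written: $(uv)(v^{-1}qv)=uqv=av$, not $a$. So conjugating $q$ gives no second factorization of $a$, and your argument rests only on $a=0$, through $0=u\cdot 0$ for every unit $u$. That same fact breaks your backward direction. In $\mathbb{F}_3$ or $\mathbb{Q}$ one has $0=1\cdot 0=(-1)\cdot 0$, so the factorization of $0$ is not unique as a pair. Your check that $q$ must be $0$ only shows that the $\sqrt{\Delta(R)}$-factor is unique.

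Read literally (the pair is unique, with $0$ included), your forward argument proves that a uniquely $U\sqrt{\Delta}$ ring has $U(R)=\{1\}$, i.e.\ $R\cong\mathbb{F}_2$. This contradicts the converse you assert. If instead only the $\sqrt{\Delta(R)}$-factor must be unique, the step ``uniqueness gives $uv=u$'' is unavailable, and the forward implication is false. For example, $\mathbb{F}_2[x]/(x^2)$ is local and not a division ring, and $x=1\cdot x=(1+x)\cdot x$ has the same second factor both times.

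The statement is only true if $0$ is exempt from the uniqueness requirement, which is how the paper's ``the converse is rather clear'' must be read. In that setting your forward proof has nothing to work with. The missing idea is to build a second factorization from a unit that \emph{commutes} with the $\sqrt{\Delta(R)}$-factor, so that Lemma~\ref{property of J sharp}(1) applies.

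The paper takes $d\in\sqrt{\Delta(R)}$ (a non-unit by Remark~\ref{local}(4)) and writes $d=1\cdot d=(1-d)\cdot\big((1-d)^{-1}d\big)$. Uniqueness forces $1-d=1$, so $\sqrt{\Delta(R)}=0$, and then every non-unit is $u\cdot 0=0$. Equivalently, for a non-unit $a=uq$ you could use $a=\big(u(1-q)\big)\big((1-q)^{-1}q\big)$ to conclude $q=0$, hence $a=0$.
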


\begin{proof}
(1) Suppose $d \in \sqrt{\Delta(R)}$. Then, we may write
\[
			d = 1 \cdot d = (1 - d)(1 - d)^{-1}d.
\]
Since $d$ and $(1 - d)^{-1}$ commute one another, Lemma \ref{property of J sharp}(1) is working to get $(1 - d)^{-1}d \in \sqrt{\Delta(R)}$, and hence
	\[
			1 = 1 - d \Rightarrow d = 0.
	\]
Therefore, $\sqrt{\Delta(R)} = 0$, and so $R$ is a division ring. The converse is rather clear, so we omit details.
			
(2) Suppose $R$ is strongly $U\sqrt{\Delta}$. Let $x \notin U(R)$. Then, $x = ud = du$ for some $u \in U(R)$ and $d \in \sqrt{\Delta(R)}$. Playing with Lemma \ref{property of J sharp}(1), we arrive at $x \in \sqrt{\Delta(R)}$, and thus $R$ is local in virtue of Remark \ref{local}(3).
			
Conversely, assume $R$ is local. Thus, again in view of Remark \ref{local}(3), all non-units are in $\sqrt{\Delta(R)}$. Letting $d \in \sqrt{\Delta(R)}$, then $d = 1 \cdot d$ is a strongly $U\sqrt{\Delta}$ decomposition. Finally, $R$ is strongly $U\sqrt{\Delta}$, as claimed.
\end{proof}

The following property is also worthwhile to be recorded.

\begin{proposition}
Any $U\sqrt{\Delta}$-ring $R$ is left-right symmetric.	
\end{proposition}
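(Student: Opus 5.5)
Here ``left-right symmetric'' means: every non-unit $a$ of $R$ can be written as $a=ub$ with $u\in U(R)$, $b\in\sqrt{\Delta(R)}$ if and only if it can be written as $a=b'u'$ with $u'\in U(R)$, $b'\in\sqrt{\Delta(R)}$. So the definition of a $U\sqrt{\Delta}$-ring does not depend on the side on which the unit sits. The plan is to pass from one kind of factorization to the other by conjugating the $\sqrt{\Delta}$-factor by the unit, using Lemma~\ref{property of J sharp}(10).

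First I will record that $\sqrt{\Delta(R)}$ is invariant under conjugation by units in both directions. Lemma~\ref{property of J sharp}(10) gives $u^{-1}qu\in\sqrt{\Delta(R)}$ for every $u\in U(R)$ and $q\in\sqrt{\Delta(R)}$. Replacing $u$ by $u^{-1}$ gives $uqu^{-1}\in\sqrt{\Delta(R)}$ as well. The underlying reason is that $(u^{-1}qu)^n=u^{-1}q^nu$, and $\Delta(R)$ is stable under left and right multiplication by units by \cite[Lemma 1(2)]{2}.

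Now let $a\notin U(R)$ and write $a=uq$ with $u\in U(R)$ and $q\in\sqrt{\Delta(R)}$. Then
\[
a=uq=(uqu^{-1})u ,
\]
and $uqu^{-1}\in\sqrt{\Delta(R)}$ by the first step, so $a$ is a product of an element of $\sqrt{\Delta(R)}$ followed by a unit. Conversely, if $a=q'u'$, then $a=u'(u'^{-1}q'u')$ with $u'^{-1}q'u'\in\sqrt{\Delta(R)}$. Hence the classes of ``left'' and ``right'' $U\sqrt{\Delta}$-rings coincide.

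I do not expect a real obstacle. The only point needing care is confirming that the intended meaning of symmetry is this left/right factorization equivalence. If it were instead meant as $R$ being $U\sqrt{\Delta}$ if and only if $R^{op}$ is, I would add the observation that $U(R^{op})=U(R)$ and $\Delta(R^{op})=\Delta(R)$; the latter holds because the description of $\Delta(R)$ via $1-xu$ is swapped with the one via $1-ux$. It follows that $\sqrt{\Delta(R^{op})}=\sqrt{\Delta(R)}$, since powers agree in $R$ and $R^{op}$, and the result then reduces to the conjugation argument above.
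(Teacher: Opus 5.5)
Your proposal is correct and follows the paper's own argument: rewrite $a=uq$ as $a=(uqu^{-1})u$ and use Lemma~\ref{property of J sharp}(10) to get $uqu^{-1}\in\sqrt{\Delta(R)}$. You also make explicit two points the paper leaves implicit: the substitution $u\mapsto u^{-1}$ needed to apply (10) in this form, and the converse direction $a=q'u'=u'(u'^{-1}q'u')$.
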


\begin{proof}
Let \( r \in R \) be a \( U\sqrt{\Delta} \)-element. Then, there exist \( u \in U(R) \) and \( d \in \sqrt{\Delta(R)} \) such that $r = ud$. Thus, we have $r = (u d u^{-1}) u$, where \( u d u^{-1} \in \sqrt{\Delta(R)} \) by
Lemma~\ref{property of J sharp}(10), and \( u \in U(R) \).
\end{proof}

We call a subring $S$ of a ring $R$ {\it rationally closed} whenever $U(R) \cap S = U(S)$.

\begin{proposition}\label{rationally closed}
Let $S \subseteq R$ be a rationally closed subring of $R$. Then, $\sqrt{\Delta(R)} \cap S \subseteq \sqrt{\Delta(S)}$.
\end{proposition}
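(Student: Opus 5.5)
The natural route is to reduce the statement to the non-radical inclusion $\Delta(R)\cap S\subseteq \Delta(S)$ and then pass to powers. Take $a\in \sqrt{\Delta(R)}\cap S$. Fix $n\in\mathbb{N}$ with $a^n\in\Delta(R)$. Since $S$ is a subring, $a^n\in S$, so $a^n\in \Delta(R)\cap S$. If we know $\Delta(R)\cap S\subseteq\Delta(S)$, then $a^n\in\Delta(S)$, and by definition $a\in\sqrt{\Delta(S)}$.

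So the whole task is to prove $\Delta(R)\cap S\subseteq \Delta(S)$ for a rationally closed subring $S$. Let $x\in\Delta(R)\cap S$ and let $v\in U(S)$ be arbitrary. We need $x+v\in U(S)$ (equivalently, $1-vx\in U(S)$).

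\begin{itemize}
\item Since $U(S)\subseteq U(R)$, the element $v$ is a unit of $R$.
\item By the defining property of $\Delta(R)$, this gives $x+v\in U(R)$.
\item On the other hand, $x+v\in S$ because $S$ is closed under addition.
\item Hence $x+v\in U(R)\cap S=U(S)$, using the rational-closedness hypothesis.
\end{itemize}

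As $v\in U(S)$ was arbitrary, $x\in\Delta(S)$.

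I do not expect a real obstacle. The only point needing care is that the rational-closedness hypothesis is what lets us pull invertibility in $R$ back to invertibility in $S$. Without it, $x+v$ could be invertible in $R$ with its inverse lying outside $S$. The inclusion $U(S)\subseteq U(R)$, used in the first step, holds automatically because $S$ shares the identity of $R$.
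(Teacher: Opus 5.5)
Your proof is correct and is essentially the paper's argument: take $n$ with $a^n\in\Delta(R)\cap S$, use $U(S)\subseteq U(R)$ to get the relevant element invertible in $R$, then pull this back through $U(R)\cap S=U(S)$ to obtain $a^n\in\Delta(S)$. The only difference is cosmetic. You test membership in $\Delta$ through $x+v$, while the paper uses $1+r^n v$, and these are equivalent characterizations.
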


\begin{proof}
Let $r\in \sqrt{\Delta(R)}\cap S$. So, there exists some $n\in \mathbb{N}$ such that $r^n\in \Delta(R)\cap S$.
Moreover, for every $u\in U(R)$, we have $1+r^n u\in U(R)$. Hence, for every $v\in U(S)$, it follows that
\[
1+r^n v\in U(R)\cap S = U(S).
\]
Thus, $r^n\in \Delta(S)$, and therefore $r\in \sqrt{\Delta(S)}$, as pursued.
\end{proof}

\begin{lemma}\label{center}
Let $R$ be a $U\sqrt{\Delta}$-ring. Then $C(R)$ is also a $U\sqrt{\Delta}$-ring.
\end{lemma}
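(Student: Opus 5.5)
By Proposition~\ref{center UJ}, $C(R)$ is a local ring whenever $R$ is a $U\sqrt{\Delta}$-ring. So the plan is to reduce the lemma to the observation that every local ring is $U\sqrt{\Delta}$. We never need to transport $U\sqrt{\Delta}$-decompositions from $R$ down to $C(R)$, which would be awkward because the unit and the $\sqrt{\Delta}$-factor of a decomposition in $R$ need not be central.

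\textbf{Step 1.} Apply Proposition~\ref{center UJ} to get that $C(R)$ is local. That proof shows, for $a \in C(R)\setminus U(C(R))$, that $1-a \in U(R)\cap C(R)$. The inverse of a central unit is central, since $r = u u^{-1} r = u^{-1} r u$ gives $u^{-1} r = r u^{-1}$. Hence $U(R)\cap C(R) = U(C(R))$, and so $1-a\in U(C(R))$.

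\textbf{Step 2.} Use Remark~\ref{local}(3), applied to the ring $S := C(R)$, to write $S = U(S)\cup \sqrt{\Delta(S)}$. Concretely, $S$ is local, so every non-unit of $S$ lies in $J(S)\subseteq \Delta(S)\subseteq \sqrt{\Delta(S)}$.

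\textbf{Step 3.} For a non-unit $a \in S$, take the decomposition $a = 1\cdot a$, with $1\in U(S)$ and $a\in\sqrt{\Delta(S)}$. This exhibits $S$ as a $U\sqrt{\Delta}$-ring. Alternatively, Proposition~\ref{center UJ} itself states that a commutative ring is $U\sqrt{\Delta}$ exactly when it is local, and $S$ is commutative, which gives the same conclusion.

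The only point needing any care is the identification $U(R)\cap C(R) = U(C(R))$ in Step 1, that is, that $C(R)$ is rationally closed in $R$. This is the routine inverse computation given there, so I expect no real obstacle.
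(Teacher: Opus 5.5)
Your proof is correct and follows essentially the paper's route: show that $C(R)$ is local, then note that every local ring is $U\sqrt{\Delta}$, since its non-units lie in $J\subseteq\Delta\subseteq\sqrt{\Delta}$. The only difference is in how locality is reached: the paper places each central non-unit $x=ud$ (centrality forces $ud=du$) into $\sqrt{\Delta(R)}\cap C(R)\subseteq\sqrt{\Delta(C(R))}$ via Proposition~\ref{rationally closed} and then invokes Remark~\ref{local}(3), whereas you cite Proposition~\ref{center UJ}, which rests on the same commuting observation through Lemma~\ref{property of J sharp}(8).
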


\begin{proof}
Choose $x \in C(R)$ and suppose $x \not\in U(C(R))$ which amounts to $x \not\in U(R)$. Since $R$ is a $U\sqrt{\Delta}$ ring, we can write $x = ud$ for some $u \in U(R)$ and $d \in \sqrt{\Delta(R)}$.
	
Now,
	\[
	d = u^{-1}x = xu^{-1} \Rightarrow x = du = ud.
	\]
Arranging Lemma~\ref{property of J sharp}(1), we have $x = ud \in \sqrt{\Delta(R)}$. But, Proposition~\ref{rationally closed} tells us that $C(R) \cap \sqrt{\Delta(R)} \subseteq \sqrt{\Delta(C(R))}$, forcing $x \in \sqrt{\Delta(C(R))}$. Thus, $x \not\in U(C(R)) \Rightarrow x \in \sqrt{\Delta(C(R))}$, and Remark \ref{local}(3) suggests that $C(R)$ is a local ring, and therefore a $U\sqrt{\Delta}$-ring, as formulated.
\end{proof}

\begin{lemma}
Let $R$ be a $U\sqrt{\Delta}$ ring and let $n \in \mathbb{Z}$. Then, either $n\cdot 1 \in U(R)$ or $n\cdot 1 \in \Delta(R)$.
\end{lemma}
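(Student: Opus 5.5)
The plan is to use that $n\cdot 1$ is central, so the results on central elements from Lemma~\ref{property of J sharp} apply directly. Suppose $n\cdot 1\notin U(R)$. Since $R$ is a $U\sqrt{\Delta}$-ring, we can write $n\cdot 1=ud$ with $u\in U(R)$ and $d\in\sqrt{\Delta(R)}$. Because $n\cdot 1$ is central, $d=u^{-1}(n\cdot 1)=(n\cdot 1)u^{-1}$, and hence $ud=n\cdot 1=du$.

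With $u$ and $d$ commuting, Lemma~\ref{property of J sharp}(1) gives $n\cdot 1=ud\in\sqrt{\Delta(R)}$. This is the same step as in the proof of Lemma~\ref{center}.

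Finally, $n\cdot 1\in\sqrt{\Delta(R)}\cap C(R)$, so Lemma~\ref{property of J sharp}(3) yields $n\cdot 1\in\Delta(R)$, which is the claim.

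There is essentially no obstacle here. The only point needing care is the commutation $ud=du$, and it follows at once from the centrality of $n\cdot 1$. Alternatively, one can run the same argument inside $C(R)$: it is local by Proposition~\ref{center UJ}, so a non-unit $n\cdot1$ lies in $\sqrt{\Delta(R)}$ by the argument of Lemma~\ref{center}, and then in $\Delta(R)$ by Lemma~\ref{property of J sharp}(3).
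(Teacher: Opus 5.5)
Your proof is correct and follows the paper's argument: write $n\cdot 1=ud$, note $ud=du$, use Lemma~\ref{property of J sharp}(1) to get $n\cdot 1\in\sqrt{\Delta(R)}$, and then Lemma~\ref{property of J sharp}(3) to conclude $n\cdot 1\in\Delta(R)$. You also give the one-line centrality justification of $ud=du$, which the paper simply asserts.
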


\begin{proof}
Assuming $n \not\in U(R)$, there exist $u \in U(R)$ and $d \in \sqrt{\Delta(R)}$ such that $n = ud = du$. Knowing Lemma \ref{property of J sharp}(1), we conclude that $n \in \sqrt{\Delta(R)}$. Therefore, Lemma \ref{property of J sharp}(3) informs us that $n \in \Delta(R)$.
\end{proof}

We now come to the following three curious assertions.

\begin{proposition}
Let $R$ be a ring. If $R/J(R)$ is $UN$, then $R$ is $U\sqrt{\Delta}$.
\end{proposition}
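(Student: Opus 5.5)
The plan is to reduce to the factor ring by Lemma~\ref{facto}. Taking $I=J(R)\subseteq J(R)$, that lemma says $R$ is a $U\sqrt{\Delta}$-ring as soon as $\bar R:=R/J(R)$ is a $U\sqrt{\Delta}$-ring. So it suffices to prove that every $UN$-ring is a $U\sqrt{\Delta}$-ring, and then apply this to $\bar R$.

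For that inclusion, let $S$ be a $UN$-ring and take $a\notin U(S)$. By hypothesis $a=uq$ with $u\in U(S)$ and $q\in {\rm Nil}(S)$. By Remark~\ref{local}(1), ${\rm Nil}(S)\subseteq\sqrt{\Delta(S)}$. To see this directly: if $q^n=0$, then $q^n=0\in\Delta(S)$, since $\Delta(S)$ is a subring and so contains $0$. Hence $a=uq$ is a $U\sqrt{\Delta}$-decomposition, and $S$ is $U\sqrt{\Delta}$.

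Applying this with $S=\bar R$ and then invoking the converse half of Lemma~\ref{facto} gives the claim. There is no real obstacle. The only points to check are that the converse direction of Lemma~\ref{facto} is used with $I=J(R)$, which it permits, and that $0\in\Delta(\bar R)$, so that nilpotents lie in $\sqrt{\Delta(\bar R)}$.

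If one prefers to avoid Lemma~\ref{facto}, the lifting can also be done by hand. Write $\bar a=\bar u\bar q$ with $\bar q^{\,n}=0$. Lift to get $a=uq+j$ with $j\in J(R)$, using that units lift modulo $J(R)$. Then $q^n\in J(R)\subseteq\Delta(R)$, so $q\in\sqrt{\Delta(R)}$, and $q+u^{-1}j\in\sqrt{\Delta(R)}$ by Lemma~\ref{property of J sharp}(7). Therefore $a=u(q+u^{-1}j)$ is the required decomposition.
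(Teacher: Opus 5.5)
Your proof is correct and is essentially the paper's argument. The paper carries out by hand exactly the lifting in your last paragraph ($a=u(q+u^{-1}j)$ with $u\in U(R)$). Your main route, via the converse half of Lemma~\ref{facto} together with ${\rm Nil}(S)\subseteq\sqrt{\Delta(S)}$, just packages that same lifting step.

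Your hand version is also slightly more careful than the paper's. The paper asserts $q\in{\rm Nil}(R)$, although only $q^n\in J(R)$ is known. Your observation that $q\in\sqrt{J(R)}\subseteq\sqrt{\Delta(R)}$, combined with Lemma~\ref{property of J sharp}(7), is the correct justification.
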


\begin{proof}
For any $a \in R$, consider $\bar a:=a + J(R) \in R/J(R)=\bar R$. Supposing that $\bar a$ is a non-unit in $\bar R$, then we can write $\bar a = \bar u \bar q$, where $\bar u \in U(\bar R)$ and $\bar q \in Nil(\bar R)$. So, $a = uq + j$ for some $j \in J(R)$. Thus, $a = u(q + u^{-1}j)$ with $u \in U(R)$ and $q \in {\rm Nil}(R)$. Note that
	\[
	q + u^{-1}j \in Nil(R) + J(R) \subseteq \sqrt{\Delta(R)}.
	\]
This shows that $q + u^{-1}j \in \sqrt{\Delta(R)}$. Hence, $a$ is of the form $u \cdot d$ for some $u \in U(R)$ and $d \in \sqrt{\Delta(R)}$, i.e., $a$ is $U\sqrt{\Delta}$. Therefore, $R$ is a $U\sqrt{\Delta}$ ring, as asserted.
\end{proof}

\begin{proposition}
Let $R$ be a strongly $\pi$-regular ring with only trivial idempotents. Then, $R$ is a $U\sqrt{\Delta}$ ring.
\end{proposition}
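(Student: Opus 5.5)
Since every element of $\sqrt{\Delta(R)}$ is itself a $U\sqrt{\Delta}$-element (take the unit to be $1$), the plan is to show that every non-unit of $R$ is nilpotent. Then every non-unit $a$ can be written as $a=1\cdot a$ with $a\in{\rm Nil}(R)\subseteq\sqrt{\Delta(R)}$ (Remark~\ref{local}(1)), and $R$ is a $U\sqrt{\Delta}$-ring. In fact this will even show that $R$ is local, hence $UN$.

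\textbf{Key tool: the strongly $\pi$-regular decomposition.} Recall that in a strongly $\pi$-regular ring every element $a$ is strongly $\pi$-regular. That is, there exist $n\ge 1$ and $b\in R$ commuting with $a$ such that $a^n=a^{n+1}b$. A standard consequence, usually attributed to Azumaya/Nicholson, is that every element $a$ admits a decomposition $a=e u+ w$, where:
\[
e\in{\rm Id}(R),\quad u\in U(R),\quad w\in {\rm Nil}(R),
\]
and $e,u,w$ all commute with each other and with $a$. Concretely, $e=a^nb^n$ is an idempotent commuting with $a$; moreover $ea$ is a unit in $eRe$ and $(1-e)a$ is nilpotent. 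I will quote this fact; if needed, it can be rederived by checking $e^2=e$ from $a^n=a^{n+1}b$ and the commutativity $ab=ba$.

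\textbf{The two cases.} Since ${\rm Id}(R)=\{0,1\}$, we have either $e=1$ or $e=0$.
\begin{itemize}
\item If $e=1$, then $a$ is a unit in $1R1=R$. Alternatively, directly: $a^n=a^{n+1}b=a^nb^n a^n\cdots$, and from $a^nb^n=1$ together with commutativity, $a$ is invertible.
\item If $e=0$, then $a=(1-e)a$ is nilpotent.
\end{itemize}
Hence $R=U(R)\cup{\rm Nil}(R)$. This gives the conclusion via the first paragraph. Equivalently, by Remark~\ref{local}(3), $R$ is local and therefore $U\sqrt{\Delta}$.

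\textbf{Main obstacle.} The only real step is justifying the decomposition, specifically that $e=a^nb^n$ is idempotent and that $(1-e)a$ is nilpotent. Here is the planned argument. From $a^n=a^{n+1}b$ and $ab=ba$ we get $a^n=a^{n+k}b^k$ for all $k$. Taking $k=n$ gives $a^n=a^{2n}b^n$, and so
\[
e^2=a^{2n}b^{2n}=(a^{2n}b^n)b^n=a^nb^n=e .
\]
Next, $((1-e)a)^n=(1-e)a^n=a^n-a^{2n}b^n=0$, so $(1-e)a$ is nilpotent. Finally, if $e=1$ then $a\cdot(a^{n-1}b^n)=1=(a^{n-1}b^n)\cdot a$, so $a\in U(R)$.
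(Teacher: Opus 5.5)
Your proof is correct, but it takes a more direct and more self-contained route than the paper's.

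\textbf{The paper's route.} The paper cites that a strongly $\pi$-regular ring is strongly clean. It then notes that a clean ring with only trivial idempotents is local, since each $a$ equals $u$ or $1+u$ with $u\in U(R)$. Finally it uses that local rings are $U\sqrt{\Delta}$.

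\textbf{Your route.} You skip the strongly clean step and work directly with Azumaya's commuting $b$ and the idempotent $e=a^nb^n$. Your checks are all correct: $a^n=a^{2n}b^n$, $e^2=e$, $((1-e)a)^n=0$, and $a\cdot a^{n-1}b^n=1=a^{n-1}b^n\cdot a$ when $e=1$. Both $e^2=e$ and the nilpotency computation really use $ab=ba$, so quoting the commuting choice of $b$ is essential, not optional. That same input also sits behind the paper's strongly clean citation, whose standard proof uses exactly this idempotent.

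\textbf{What each approach gives.} Your argument yields a sharper conclusion: $R=U(R)\cup{\rm Nil}(R)$. So $R$ is local with nil Jacobson radical, and hence even a $UN$-ring. The paper's citation only delivers locality.

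\textbf{Presentation.} The side claims that $ea$ is a unit in $eRe$ and that $a=eu+w$ are never used, so you can drop them. The garbled ``alternatively, directly'' line in the $e=1$ case should be replaced by the explicit inverse $a^{n-1}b^n$ from your last paragraph.
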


\begin{proof}
Assume that $R$ is strongly $\pi$-regular. According to \cite[Proposition 2.6]{bur}, $R$ is strongly clean. Now, since $R$ is strongly clean and has only trivial idempotents (that are, $0$ and $1$), it follows at once that $R$ is a local ring. But, as every local ring is $U\sqrt{\Delta}$, the result sustains.
\end{proof}

\begin{proposition}\label{good}
If $R$ is a $U\sqrt{\Delta}$-ring with a 2-good identity, then $R$ is a 2-good ring.
\end{proposition}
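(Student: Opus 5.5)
Units are trivially sums of two units once $1$ is one: if $1=v+w$ with $v,w\in U(R)$, then every $u\in U(R)$ satisfies $u=uv+uw$. So the plan concerns only the non-units. Fix a non-unit $a\in R$ and use the $U\sqrt{\Delta}$ hypothesis to write $a=uq$ with $u\in U(R)$ and $q\in\sqrt{\Delta(R)}$. Since $u$ is invertible, it suffices to show that $q$ is a sum of two units, because then
\[
a=u(w_1+w_2)=uw_1+uw_2 .
\]
Thus the problem reduces to proving that every element of $\sqrt{\Delta(R)}$ is a sum of two units. No 2-good identity hypothesis is needed for this step.

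The key identity is
\[
q=(q-1)+1 .
\]
By Lemma~\ref{property of J sharp}(2), $1-q\in U(R)$, hence $q-1=-(1-q)\in U(R)$. Together with $1\in U(R)$, this exhibits $q$ as a sum of two units. Hence $a=u(q-1)+u$ is a sum of the two units $u(q-1)$ and $u$.

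The hypothesis that $1$ is 2-good is therefore used only for the unit elements. There is essentially no obstacle. The one point to check is that Lemma~\ref{property of J sharp}(2) holds for arbitrary $q\in\sqrt{\Delta(R)}$, with no centrality assumption, and it does: it only uses $1-q^n\in U(R)$ for $q^n\in\Delta(R)$. Combining the two cases shows that every element of $R$ is a sum of two units, so $R$ is 2-good.
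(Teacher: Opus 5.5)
Your proof is correct and follows the paper's argument: the paper writes a non-unit $a=ud$ as $a=u(1+d)-u$, whereas you write $a=u(q-1)+u$, which is the same decomposition up to sign and uses Lemma~\ref{property of J sharp}(2) directly (the paper's version tacitly needs $-d\in\sqrt{\Delta(R)}$ to get $1+d\in U(R)$). In both proofs the 2-good identity is used only for units, via $u=u\cdot 1$; your write-up states this more cleanly than the paper's proof, which reuses the letters $u,v$ for the decomposition of $1$.
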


\begin{proof}
Let $a \in R$ be a non-unit element. Then, we can write $a = ud$, where $u \in U(R)$ and $d \in \sqrt{\Delta(R)}$. So,
	\[
	a = u(1 + d) - u = uv - u,
	\]
where $v := 1 + d \in U(R)$. By hypothesis, we have $1 = u + v$. So,
	\[
	u = u \cdot 1 = u^2 + uv,
	\]
where $u^2, uv\in U(R)$. Therefore, $R$ is a 2-good ring, as claimed.
\end{proof}

Combining \cite[Example~3.6]{sd} and Proposition \ref{good}, we obtain the following strict inclusions among well-known classes of rings:
\begin{align*}
	\{UN\text{-rings with 2-good identity}\} &\subsetneqq \{U\sqrt{J}\text{-rings with 2-good identity}\} \\
	&\subseteq {\{U\sqrt{\Delta}\text{-rings with 2-good identity}\}} \\
	&\subsetneqq \{2\text{-good rings}\}.
\end{align*}


\section{Matrix Rings over $U\sqrt{\Delta}$ and $UN$-rings}\label{sec3}

In this section, we seek some criteria under which matrix rings are \( U\sqrt{\Delta} \)-rings.

\begin{proposition}
If $M_n(R)$ is a $U\sqrt{\Delta}$-ring, then $C(R)$ is a local ring.
\end{proposition}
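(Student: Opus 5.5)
The plan is to reduce to Proposition~\ref{center UJ}, which says that the center of any $U\sqrt{\Delta}$-ring is local. Applied to $M_n(R)$, it gives that $C(M_n(R))$ is a local ring. So the only remaining work is to identify $C(M_n(R))$ with $C(R)$.

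The identification is the standard fact that the center of a full matrix ring consists of the scalar matrices with central entries:
\[
C(M_n(R))=\{cI_n \mid c\in C(R)\}.
\]
I will recall the proof briefly using matrix units. Suppose $A=(a_{ij})$ commutes with every $E_{kl}$.
\begin{itemize}
\item Comparing $AE_{kl}$ with $E_{kl}A$ shows that $a_{ik}=0$ for $i\neq k$ and that $a_{kk}=a_{ll}$, so $A=aI_n$ for some $a\in R$.
\item Commuting $A$ with $rI_n$ for arbitrary $r\in R$ then forces $a\in C(R)$.
\end{itemize}
The reverse inclusion is obvious.

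With this in hand, the map $c\mapsto cI_n$ is a ring isomorphism $C(R)\cong C(M_n(R))$. Since being local is invariant under ring isomorphism, $C(R)$ is local.

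If one prefers to avoid Proposition~\ref{center UJ}, the same conclusion can be reached directly. Take $c\in C(R)$ that is a non-unit of $C(R)$. Then $cI_n$ is central in $M_n(R)$, and it is not a unit there, because its inverse would have to be $c^{-1}I_n$ with $c^{-1}\in C(R)$. Lemma~\ref{property of J sharp}(8) then yields $1-cI_n\in U(M_n(R))$, hence $1-c\in U(R)\cap C(R)=U(C(R))$, which is exactly locality of $C(R)$.

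There is no serious obstacle. The only point needing care is the unit bookkeeping: a central element that is a unit in $R$ has its inverse also central, so $U(R)\cap C(R)=U(C(R))$. The same observation shows that non-units of $C(R)$ correspond to non-units of $M_n(R)$ that are central.
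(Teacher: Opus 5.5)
Your proposal is correct and follows the paper's proof: apply Proposition~\ref{center UJ} to $M_n(R)$ and transfer locality through $C(M_n(R))\cong C(R)$. The paper cites that isomorphism as well known, while you verify it with matrix units. Your alternative direct argument via Lemma~\ref{property of J sharp}(8) is also valid; it is just the proof of Proposition~\ref{center UJ} written out for scalar central matrices.
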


\begin{proof}
Since $M_n(R)$ is a $U\sqrt{\Delta}$-ring, Proposition \ref{center UJ} proposes that the center $C(M_n(R))$ is local. But it is well known that $C(M_n(R)) \cong C(R)$. Hence, $C(R)$ is a local ring.
\end{proof}

As a valuable consequence, we derive:

\begin{corollary}\label{mm}
Let $R$ be a commutative ring. Then, $M_n(R)$ is a $U\sqrt{\Delta}$-ring if and only if $R$ is a local ring.
\end{corollary}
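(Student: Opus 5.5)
The plan is to prove the two implications separately; the forward direction is already available from earlier results.

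For the forward direction, assume $M_n(R)$ is a $U\sqrt{\Delta}$-ring. The preceding proposition then tells us that $C(R)$ is local. Since $R$ is commutative, $C(R)=R$, so $R$ is local.

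For the converse, assume $R$ is a commutative local ring with maximal ideal $\mathfrak m=J(R)$. The idea is to pass to the residue field and use a result of the same shape as Lemma~\ref{facto}. We have $J(M_n(R))=M_n(J(R))$ and
\[
M_n(R)/M_n(J(R))\cong M_n(R/J(R)) = M_n(F),
\]
where $F=R/\mathfrak m$ is a field. The ring $M_n(F)$ is simple Artinian, so by the earlier Example (item (4)) it is a $U\sqrt{\Delta}$-ring. Taking $I:=M_n(J(R))\subseteq J(M_n(R))$ in Lemma~\ref{facto}, we conclude that $M_n(R)$ is a $U\sqrt{\Delta}$-ring.

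The only nontrivial input is the claim that simple Artinian rings are $U\sqrt{\Delta}$. It is listed earlier without proof, so I would check it directly for $M_n(F)$ via the Smith normal form.

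Any non-invertible matrix $A$ can be written as $A=PDQ$, where $P,Q$ are invertible and $D=\mathrm{diag}(1,\dots,1,0,\dots,0)$ has rank $r<n$. Let $N$ be the nilpotent cyclic-shift matrix with ones on the superdiagonal in positions $(1,2),\dots,(r,r+1)$. One can find an invertible permutation-type matrix $W$ with $D=NW$, because $D$ and $N$ have the same rank and their column spaces are equal. Then
\[
A=P N W Q=(PWQ)\bigl((WQ)^{-1}N(WQ)\bigr),
\]
where $(WQ)^{-1}N(WQ)$ is nilpotent. Hence $A$ lies in $U(M_n(F))\cdot {\rm Nil}(M_n(F))\subseteq U\sqrt{\Delta}(M_n(F))$.

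The step I expect to need care is exactly this factorization $D=NW$. If it proves awkward, I would instead cite the known result that matrix rings over division rings are $UN$-rings, and use that ${\rm Nil}\subseteq\sqrt{\Delta}$ to conclude.
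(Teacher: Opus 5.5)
Your proof is correct and follows the paper's route. The forward direction is exactly the preceding proposition with $C(R)=R$. The converse, which the paper leaves implicit, is the reduction $M_n(R)/M_n(J(R))\cong M_n(R/J(R))$ combined with Lemma~\ref{facto} and the fact that matrix rings over fields are $UN$. Your factorization $D=NW$ does work: take $W$ to be the cyclic permutation with $e_j\mapsto e_{j+1}$ for $j\le r$, $e_{r+1}\mapsto e_1$, and all other $e_k$ fixed. So your direct check of the field case is sound.
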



Generally, for one-sided Artinian rings, we are able to present the following criterion.

\begin{theorem}\label{art}
Let $R$ be a left (resp., right) Artinian ring and $n$ a natural number. Then, $R$ is a $U\sqrt{\Delta}$-ring if and only if $R$ is a $UN$-ring, if and only if $M_n(R)$ is a $UN$-ring, if and only if $M_n(R)$ is a $U\sqrt{\Delta}$-ring.
\end{theorem}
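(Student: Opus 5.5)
The plan is to reduce everything to the semisimple quotient $\bar R=R/J(R)$. Since $R$ is one-sided Artinian, $J(R)$ is nilpotent. Hence $J(R)\subseteq {\rm Nil}(R)$. By Wedderburn--Artin, $\bar R\cong \prod_{i=1}^k M_{n_i}(D_i)$ with the $D_i$ division rings. The same holds for $M_n(R)$: it is again one-sided Artinian, with nilpotent radical $J(M_n(R))=M_n(J(R))$ and quotient $M_n(\bar R)$.

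\textbf{Step 1: in an Artinian ring, $\sqrt{\Delta}$ coincides with the ``nil modulo $J$'' elements.} I first show that for a one-sided Artinian ring $S$ we have $\Delta(S)=J(S)$. Indeed, $\Delta(\bar S)=\Delta(S)/J(S)$ by \cite[Proposition 6(3)]{2}. For a semisimple ring $\bar S=\prod M_{n_i}(D_i)$, $\Delta$ is computed componentwise by \cite[Lemma 1(5)]{2}. In each $M_m(D)$, $\Delta$ is a Jacobson radical subring stable under units. Every matrix in $M_m(D)$ is a sum of units when $m\ge 2$ or $|D|>2$, and $\Delta(\mathbb F_2)=0$. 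So $\Delta$ is an ideal there, hence equals $J=0$. Consequently $\sqrt{\Delta(S)}=\{x: x^m\in J(S)\}$. Since $J(S)$ is nilpotent, this set is exactly ${\rm Nil}(S)$. Therefore $U\sqrt{\Delta}(S)=U(S)\,{\rm Nil}(S)$, and for Artinian $S$ the notions ``$U\sqrt{\Delta}$-ring'' and ``$UN$-ring'' coincide. Applied to $S=R$ and $S=M_n(R)$, this gives (i)$\Leftrightarrow$(ii) and (iii)$\Leftrightarrow$(iv).

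\textbf{Step 2: $R$ is $UN$ if and only if $M_n(R)$ is $UN$.} By Lemma \ref{facto} (and its analogue for $UN$, which holds with the same proof because nil $+$ nilpotent ideal stays nil), both conditions pass to and from the quotient modulo the nilpotent radical. So it suffices to treat the semisimple case, $\bar R$ versus $M_n(\bar R)$.

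By Proposition \ref{indecomposable}, a $U\sqrt{\Delta}$ (hence $UN$) ring is indecomposable. Thus if $\bar R$ is $UN$, then $k=1$ and $\bar R=M_m(D)$, so $M_n(\bar R)\cong M_{nm}(D)$ is simple Artinian. This is $UN$ by Example (4) of Section~\ref{sec2} together with Step 1; concretely, any singular matrix is $PAQ$ with $A=\mathrm{diag}(1,\dots,1,0,\dots,0)$, and $A$ is a unit times a nilpotent via a cyclic permutation matrix. Conversely, if $M_n(\bar R)\cong\prod M_{nn_i}(D_i)$ is $UN$, indecomposability forces $k=1$. Then $\bar R=M_{n_1}(D_1)$ is simple Artinian, hence $UN$.

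The main obstacle I expect is Step 1, specifically the claim $\Delta(M_m(D))=0$. Small cases such as $M_m(\mathbb F_2)$ and $\mathbb F_2$ itself must be checked separately. I would try a direct argument first: for nonzero $x\in M_m(D)$ with $m\geq 2$, pick a unit $u$ with $1-ux$ singular. This is possible by reducing $x$ to rank normal form and choosing $u$ to invert a nonzero pivot. This avoids the sum-of-units argument and handles all division rings uniformly.
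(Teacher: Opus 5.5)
Your proof is correct. Step 2 is essentially the paper's own argument: pass to the semisimple quotients $R/J(R)\cong\prod_{i=1}^k M_{m_i}(D_i)$ and $M_n(R)/J(M_n(R))\cong\prod_{i=1}^k M_{nm_i}(D_i)$, then use Proposition~\ref{indecomposable} to force $k=1$.

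The real difference is in how you identify the $U\sqrt{\Delta}$ and $UN$ conditions. The paper never computes $\Delta$. From $R$ being $U\sqrt{\Delta}$ it gets that $R/J(R)$ is $U\sqrt{\Delta}$ (Lemma~\ref{facto}), hence indecomposable, hence $R/J(R)\cong M_m(D)$. That ring is $UN$ by a cited result, and the paper then lifts the $UN$ property over the nil ideal $J(R)$. The case of $M_n(R)$ comes from applying the same reasoning to the Artinian ring $M_n(R)$.

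You instead prove $\Delta(S)=J(S)$ for every one-sided Artinian $S$, via $\Delta(S/J(S))=\Delta(S)/J(S)$ and $\Delta(M_m(D))=0$. Hence $\sqrt{\Delta(S)}={\rm Nil}(S)$ and $U\sqrt{\Delta}(S)=U(S)\,{\rm Nil}(S)$. This costs an extra computation, but it buys more: an element-by-element identification valid in every one-sided Artinian ring, whether or not that ring is $U\sqrt{\Delta}$. It makes both (i)$\Leftrightarrow$(ii) and (iii)$\Leftrightarrow$(iv) immediate.

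Your fallback argument for $\Delta(M_m(D))=0$ is the cleaner of your two options. For $x\neq 0$, pick $v$ with $xv\neq 0$ and a unit $u$ with $u(xv)=v$; then $(1-ux)v=0$, so $1-ux$ is singular. This works uniformly for all $m\ge 1$ and all $D$, including $\mathbb F_2$, and avoids the sum-of-units theorem.

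You also make explicit two points the paper only cites or uses implicitly: the $UN$-analogue of Lemma~\ref{facto} over a nil ideal, and the factorization of a singular matrix as a unit times a nilpotent via a cyclic permutation.

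One minor wording slip: ``a $U\sqrt{\Delta}$ (hence $UN$) ring is indecomposable'' should read ``a $UN$ (hence $U\sqrt{\Delta}$) ring is indecomposable''.
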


\begin{proof}
If \( R \) is left Artinian, then it is well known that \( J(R) \) is nilpotent and thus nil. We show that, for all left Artinian rings, $UN$-rings and $U\sqrt{\Delta}$-rings are equivalent notions. Firstly, it is straightforward that every $UN$-ring is a $U\sqrt{\Delta}$-ring.

Now, to show the converse, assume that $R$ is a $U\sqrt{\Delta}$-ring. Since $R$ is left Artinian, we know that $$R/J(R) \cong \prod_{i=1}^k M_{m_i}(D_i),$$ where $k\in\mathbb{N}$ and each $D_i$ is a division ring. Thus, $$M_n(R)/J(M_n(R)) \cong M_n(R/J(R)) \cong \prod_{i=1}^{k}M_{nm_i}(D_i).$$

However, on the other hand, from Proposition~\ref{indecomposable} and Lemma~\ref{facto} we conclude that $R/J(R) \cong M_m(D)$, where $m\in\mathbb{N}$ and $D$ is a division ring. Since it follows from \cite[Proposition~0(a)]{vun} that $M_m(D)$ is a $UN$-ring and $J(R)$ is nil, it must be that $R$ is a $UN$-ring. Moreover, since it is principally known that \( R \) is left Artinian if and only if \( M_n(R) \) is left Artinian, it suffices to prove only that \( R \) is a $UN$-ring if and only if \( M_n(R) \) is a $UN$-ring. To that end, $R$ is a $UN$-ring if and only if $R/J(R)$ is a $UN$-ring, if and only if $k=1$ in accordance with Proposition~\ref{indecomposable}, if and only if
$M_n(R)/J(M_n(R))$ is a $UN$-ring, if and only if $M_n(R)$ is a $UN$-ring, because $J(M_n(R))$ is nil, as suspected.
\end{proof}


Given a ring $R$ and a central element $s$ of $R$, the $4$-tuple $\begin{pmatrix}
	R & R\\
	R & R
\end{pmatrix}$ becomes a ring with addition component-wise and with multiplication defined by
$$\begin{pmatrix}
	a_{1} & x_{1}\\
	y_{1} & b_{1}
\end{pmatrix}\begin{pmatrix}
	a_{2} & x_{2}\\
	y_{2} & b_{2}
\end{pmatrix}=\begin{pmatrix}
	a_{1}a_{2}+sx_{1}y_{2} & a_{1}x_{2}+x_{1}b_{2} \\
	y_{1}a_{2}+b_{1}y_{2} & sy_{1}x_{2}+b_{1}b_{2}
\end{pmatrix}.$$
This ring is hereafter denoted by $K_s(R)$.

\medskip

We are now prepared to prove the following.

\begin{proposition}
Suppose $R$ is a commutative ring and $s \in J(R)$. If $K_s(R)$ is a $U\sqrt{\Delta}$-ring, then $R$ is also a $U\sqrt{\Delta}$-ring.
\end{proposition}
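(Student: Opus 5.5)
Since $R$ is commutative, Proposition~\ref{center UJ} reduces the goal to showing that $R$ is a local ring. The plan is to push the $U\sqrt{\Delta}$ property of $K_s(R)$ down to $R$ through a quotient by an ideal, and then read off locality from the center.

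First I would study the set
\[
I=\begin{pmatrix} J(R) & R\\ R & J(R)\end{pmatrix}\subseteq K_s(R).
\]
Using the multiplication rule and $s\in J(R)$, I would check that $I$ is a two-sided ideal. For instance, the $(1,1)$ entry of a product with a factor in $I$ has the form $a_1a_2+sx_1y_2$, and this lies in $J(R)$ because either $a_1$ or $a_2$ lies in $J(R)$ and $s\in J(R)$.

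Next I would map the quotient onto $R/J(R)\times R/J(R)$ via the diagonal, sending $\begin{pmatrix} a&x\\ y&b\end{pmatrix}\mapsto (\bar a,\bar b)$. The off-diagonal contributions $sx_1y_2$ and $sy_1x_2$ vanish modulo $J(R)$, so this is a ring homomorphism, and its kernel is exactly $I$. By Lemma~\ref{facto}, the quotient $K_s(R)/I\cong R/J(R)\times R/J(R)$ is then a $U\sqrt{\Delta}$-ring.

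However, a direct product of two nonzero rings has the nontrivial central idempotent $(1,0)$. This contradicts Proposition~\ref{indecomposable} unless $R/J(R)=0$, which is impossible. So the ideal $I$ is too large and this route does not work.

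Instead I would use the center. Proposition~\ref{center UJ} shows that $C(K_s(R))$ is local. The scalar embedding $r\mapsto \begin{pmatrix} r&0\\0&r\end{pmatrix}$ lands in the center, because $R$ is commutative and $s$ is central. It remains to show that $R$ is rationally closed in $K_s(R)$, that is, that a scalar matrix which is a unit in $K_s(R)$ is already a unit in $R$.

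For this I would use the determinant-like map $\delta\begin{pmatrix} a&x\\ y&b\end{pmatrix}=ab-sxy$, which is multiplicative on $K_s(R)$ for commutative $R$; this is a standard fact that should be checked by expansion. If the scalar matrix of $r$ is invertible, then $\delta$ gives $r^2\in U(R)$, so $r\in U(R)$. Hence the image of $R$ is a rationally closed subring.

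Then I would show that $R$ is local directly. Let $r\in R$ be a nonunit. Its scalar matrix $\hat r$ is central in $K_s(R)$ and, by rational closedness, is not a unit there. Lemma~\ref{property of J sharp}(8) then gives $1-\hat r\in U(K_s(R))$. Applying $\delta$ yields $(1-r)^2\in U(R)$, so $1-r\in U(R)$. Therefore $R$ is local, and by Proposition~\ref{center UJ}, or simply because local rings are $U\sqrt{\Delta}$, $R$ is a $U\sqrt{\Delta}$-ring.

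The main obstacle I expect is verifying that $\delta$ is multiplicative, i.e.\ that $\delta(AB)=\delta(A)\delta(B)$ under the twisted multiplication. This is a routine but careful expansion that uses commutativity of $R$.
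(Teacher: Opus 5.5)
Your center-and-norm argument is correct, and it takes a different route from the paper's. The paper works with the non-unit $\begin{pmatrix} a&0\\ 0&0\end{pmatrix}$. It takes its $U\sqrt{\Delta}$-decomposition to be $\begin{pmatrix} u&0\\ 0&1\end{pmatrix}\begin{pmatrix} d&0\\ 0&0\end{pmatrix}$ and reads off $u\in U(R)$ and $d^n\in\Delta(R)$ from \cite[Lemma~4.1]{mdjo}, so it descends a decomposition entrywise.

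You instead reduce to locality via Proposition~\ref{center UJ} and embed $R$ as central scalar matrices. You then use the multiplicative map $\delta(A)=ab-sxy$ twice: for rational closedness, and to pull $1-\hat r\in U(K_s(R))$ (from Lemma~\ref{property of J sharp}(8)) back to $1-r\in U(R)$. The expansion you flagged as the main obstacle does check out. This is the paper's "go through the center" idea for $M_n(R)$, transplanted to $K_s(R)$.

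Your version has three advantages:
\begin{itemize}
\item It needs no control over the shape of the factors. The hypothesis only supplies \emph{some} unit and \emph{some} element of $\sqrt{\Delta(K_s(R))}$, whereas the paper's argument presupposes diagonal factors of that special form. Note that $\begin{pmatrix} a&0\\ 0&0\end{pmatrix}$ is a non-unit even when $a\in U(R)$.
\item It avoids the external lemma.
\item It never uses $s\in J(R)$. It shows that $K_s(R)$ being $U\sqrt{\Delta}$ forces $R$ to be local for every $s\in R$. This includes $s$ a unit, where $K_s(R)\cong M_2(R)$ and the hypothesis can actually hold.
\end{itemize}

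You also discarded your first route for the wrong reason. A contradiction derived from the hypothesis does not mean the route fails; it proves the implication.

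To make that route airtight, write $A=\begin{pmatrix} a&x\\ y&b\end{pmatrix}$. Then $A\begin{pmatrix} b&-x\\ -y&a\end{pmatrix}=\begin{pmatrix} b&-x\\ -y&a\end{pmatrix}A=\delta(A)\cdot 1$, so an element of $K_s(R)$ is a unit exactly when its $\delta$ lies in $U(R)$. For $A\in I$ one gets $\delta(1-A)=(1-a)(1-b)-sxy\in 1+J(R)$. Hence $I\subseteq J(K_s(R))$, and Lemma~\ref{facto} applies to $K_s(R)/I\cong R/J(R)\times R/J(R)$. The central idempotent $(1,0)$ then contradicts Proposition~\ref{indecomposable} whenever $R\neq 0$.

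So for $s\in J(R)$ no nonzero ring satisfies the hypothesis, and the proposition holds vacuously. Your short discarded argument is already a complete proof. The norm argument is what you need for the non-vacuous version with arbitrary $s$.
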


\begin{proof}
Choose	$
	\begin{pmatrix}
		a & 0 \\
		0 & 0
	\end{pmatrix} \in K_s(R)
	$
to be a non-unit for some $a \in R$. Since $K_s(R)$ is a $U\sqrt{\Delta}$-ring, we can write
	\[
	\begin{pmatrix}
		a & 0 \\
		0 & 0
	\end{pmatrix} =
	\begin{pmatrix}
		u & 0 \\
		0 & 1
	\end{pmatrix}
	\begin{pmatrix}
		d & 0 \\
		0 & 0
	\end{pmatrix},
	\]
where
	$
	\begin{pmatrix}
		u & 0 \\
		0 & 1
	\end{pmatrix} \in U(K_s(R))$ and
	$\begin{pmatrix}
		d & 0 \\
		0 & 0
	\end{pmatrix} \in \sqrt{\Delta(K_s(R))}.
	$
But \cite[Lemma 4.1]{mdjo} gives us that $u \in U(R)$. Moreover, since
	$
	\begin{pmatrix}
		d^n & 0 \\
		0 & 0
	\end{pmatrix} \in \Delta(K_s(R))
	$
for some $n \in \mathbb{N}$, it follows again from \cite[Lemma 4.1]{mdjo} that $d^n \in \Delta(R)$. Hence, $d \in \sqrt{\Delta(R)}$, and so we obtain $a = ud$ with $u \in U(R)$ and $d \in \sqrt{\Delta(R)}$. Consequently, $R$ is a $U\sqrt{\Delta}$-ring, as promised.
\end{proof}


\section{Group Rings over $U\sqrt{\Delta}$-rings}\label{sec4}

Suppose that $G$ is an arbitrary group and $R$ is an arbitrary ring. As usual, $RG$ stands for the group ring of $G$ over $R$. It is known that the homomorphism $\varepsilon : RG\rightarrow R$, defined by $\varepsilon \left(\sum_{g\in G}a_{g}g\right)=\sum_{g\in G}a_{g}$, is called the {\it augmentation map} of $RG$ and its kernel, denoted by $\varepsilon (RG)$, is called the {\it augmentation ideal} of $RG$.

In \cite{kjun}, $UN$-group rings were characterized over the course of three pages. In what follows, we present a simpler and more concise classification of $UN$-group rings. Subsequently, using the same approach, we characterize $U\sqrt{\Delta}$-group rings.


\medskip

In this way, our first major result is the following one.

\begin{theorem}
Let \( F \) be a field and let \( G \) be a group. Then, the following two statements are equivalent:

(1) The group ring \( FG \) is a \( U\sqrt{\Delta} \)-ring.

(2) The group ring \( FG \) is local.
\end{theorem}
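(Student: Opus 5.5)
The implication (2)$\Rightarrow$(1) is immediate: every local ring is a $U\sqrt{\Delta}$-ring by Remark~\ref{local}(3), since then every non-unit already lies in $\sqrt{\Delta(FG)}$ and is a $U\sqrt{\Delta}$-element with unit factor $1$. So the whole content is (1)$\Rightarrow$(2).

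For (1)$\Rightarrow$(2), I would pass to the residue field through the augmentation map. The map $\varepsilon:FG\to F$ is surjective with kernel the augmentation ideal $\varepsilon(FG)$. It therefore suffices to show $\varepsilon(FG)\subseteq J(FG)$: then $\varepsilon(FG)$ is a maximal ideal contained in $J(FG)$, hence equal to it, and $FG/J(FG)\cong F$ is a division ring, so $FG$ is local.

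To get this inclusion I would first restrict the group and the characteristic.

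\textbf{Step 1: every element of $G$ has prime-power order.} Take $g\in G$. If $g$ has infinite order, then $F\langle g\rangle\cong F[x,x^{-1}]$. I would try to reproduce the argument of Example~\ref{laurent} inside $FG$. The element $1+g+g^2$ is not central in $FG$, so Lemma~\ref{property of J sharp}(8) does not apply directly. Instead I would decompose $1+g+g^2=ud$ and try to exploit that $d$ commutes with nothing useful. A cleaner route is to use Proposition~\ref{indecomposable} together with the center $C(FG)$, which is local by Proposition~\ref{center UJ}. Class sums of finite conjugacy classes are central. If $g$ has finite order $n$ with $\operatorname{char}F\nmid n$, then $e=\frac1n\sum_{i<n}g^i$ is a nontrivial idempotent. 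I expect this not to be central in general, but its conjugates generate a contradiction when combined with Dedekind-finiteness and the $\Delta$-argument.

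\textbf{Step 2: consequence for the characteristic.} Step 1 should force $\operatorname{char}F=p>0$ and $G$ to be a $p$-group, since otherwise a nontrivial idempotent or a non-unit $1-g$ appears. For the latter, Lemma~\ref{property of J sharp}(2) applied to an element of $\sqrt{\Delta}$ built from $1-g$ would make $g$ a non-unit, a contradiction.

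\textbf{Step 3: $\varepsilon(FG)\subseteq J(FG)$.} For each $g\in G$, the element $1-g$ is a non-unit, being a zero-divisor in $F\langle g\rangle$. By hypothesis $1-g=ud$ with $d\in\sqrt{\Delta(FG)}$. The goal is to show $1-g\in J(FG)$. In characteristic $p$ with $g^{p^k}=1$ we get $(1-g)^{p^k}=0$, so $1-g$ is nilpotent. The remaining task is to upgrade the local nilpotency of all $1-g$ to $\varepsilon(FG)$ being nil, and hence contained in $J$. For locally finite $p$-groups this is the classical theorem (Connell).

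\textbf{Main obstacle.} The hardest step is excluding elements of infinite order, and more generally non-locally-finite $G$, without any solvability hypothesis. For this I would pass to the subring $F\langle g\rangle$. It is rationally closed in $FG$, since units of group rings of subgroups restrict via the projection onto $F\langle g\rangle$. Proposition~\ref{rationally closed} then transfers $\sqrt{\Delta}$ membership, and I would combine it with the central-element argument of Example~\ref{laurent}, applied to the commutative ring $F\langle g\rangle$, to obtain a contradiction.
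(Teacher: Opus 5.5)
\textbf{There is a genuine gap: you never prove the inclusion your argument rests on.} Your reduction to showing $\varepsilon(FG)\subseteq J(FG)$ is fine, but the route you sketch toward it does not close.

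In Step~1, for $g$ of infinite order, rational closedness of $F\langle g\rangle$ and Proposition~\ref{rationally closed} only give $\sqrt{\Delta(FG)}\cap F\langle g\rangle\subseteq\sqrt{\Delta(F\langle g\rangle)}$. The factors of a decomposition $1+g+g^2=ud$ in $FG$ need not lie in $F\langle g\rangle$. So you never learn that $F\langle g\rangle$ is a $U\sqrt{\Delta}$-ring, and the argument of Example~\ref{laurent} cannot be run.

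For $g$ of finite order, the idempotent $\frac1n\sum_{i<n} g^i$ gives no contradiction. Proposition~\ref{indecomposable} only rules out central idempotents, and $U\sqrt{\Delta}$-rings such as $M_2(\mathbb{Z}_2)$ have plenty of non-central ones.

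Step~2 is not an argument, and its conclusion ${\rm char}\,F=p>0$ is false for trivial $G$.

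Most seriously, Step~3 quotes Connell, which requires $G$ to be a \emph{locally finite} $p$-group, and nothing in your plan produces local finiteness; you flag this yourself as the main obstacle. Extracting local finiteness of $G$ from ring-theoretic hypotheses on $FG$ is genuinely hard; even the paper's Theorem~\ref{group ring}(1) only obtains that $G$ is a $p$-group. So this approach is both incomplete and unnecessary.

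\textbf{The missing idea is to apply $\varepsilon$ directly to the $U\sqrt{\Delta}$-decomposition.} No information about $G$ is needed.
\begin{itemize}
\item By Lemma~\ref{1}, $\varepsilon(\Delta(FG))\subseteq\Delta(F)$.
\item $\Delta(F)=0$ for a field: if $x\neq0$, then $x+(-x)=0$ is not a unit.
\item Hence if $z\in\sqrt{\Delta(FG)}$ with $z^n\in\Delta(FG)$, then $\varepsilon(z)^n=0$, so $\varepsilon(z)=0$.
\item Thus every non-unit $\alpha=uz$ has $\varepsilon(\alpha)=\varepsilon(u)\varepsilon(z)=0$, i.e., it lies in the augmentation ideal.
\item Conversely, no element of the augmentation ideal is a unit. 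So the non-units are exactly this ideal, and $FG$ is local.
\end{itemize}
This is the paper's proof.

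The same idea also finishes your own reduction in one line. Take $\alpha\in\varepsilon(FG)$ and $\beta\in FG$, and suppose $1-\alpha\beta$ were a non-unit. Writing $1-\alpha\beta=uj$ would give $1=\varepsilon(u)\varepsilon(j)$ with $\varepsilon(j)\in U(F)\cap\sqrt{\Delta(F)}=\emptyset$, a contradiction. Hence $\varepsilon(FG)\subseteq J(FG)$, exactly as in the proof of Theorem~\ref{group ring}(1).
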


\begin{proof}
(2) \(\implies\) (1) This implication is immediate.
	
(1) \(\implies\) (2) Let \(\alpha \in FG\) be a non-unit element. Since \(FG\) is a \( U\sqrt{\Delta} \)-ring, there exist \( u \in U(FG) \) and \( z \in \sqrt{\Delta(FG)} \) such that $\alpha = uz$. Now, applying the augmentation map \(\varepsilon\) to both sides yields that
	\[
	\varepsilon(\alpha) = \varepsilon(u z) = \varepsilon(u) \varepsilon(z).
	\]
Because \(F\) is a field, it must be that \(\varepsilon(z) = 0\). Hence, $\varepsilon(\alpha) = 0$, which in turn ensures \(\alpha \in \varepsilon(FG)\). Therefore, every non-unit element lies in the ideal \(\varepsilon(FG)\), and consequently the set of non-units forms an ideal. This establishes that \(FG\) is local, as required.
\end{proof}

\begin{lemma}\label{1}
The inclusion $\varepsilon(\Delta(RG)) \subseteq \Delta(R)$ is always fulfilled.
\end{lemma}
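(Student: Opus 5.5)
To prove $\varepsilon(\Delta(RG)) \subseteq \Delta(R)$, I will use the unit-characterization of $\Delta$ and the fact that the augmentation map is a surjective ring homomorphism $\varepsilon : RG \to R$ that restricts to the identity on the copy of $R$ inside $RG$ (elements $r\cdot 1_G$).

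Fix $x \in \Delta(RG)$. It suffices to show that $1 - \varepsilon(x)v \in U(R)$ for every $v \in U(R)$, since this is exactly the membership $\varepsilon(x) \in \Delta(R)$ in the description of $\Delta(R)$ recalled in the Introduction.

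So let $v \in U(R)$. Regard $v$ as the element $v\cdot 1_G$ of $RG$. It is a unit there, with inverse $v^{-1}\cdot 1_G$, because $R \to RG$, $r \mapsto r\cdot 1_G$, is a unital ring embedding. Since $x \in \Delta(RG)$, the defining property gives
$$1 - x v \in U(RG).$$
Applying $\varepsilon$, which is a unital ring homomorphism and hence sends units to units, yields
$$\varepsilon(1 - xv) = 1 - \varepsilon(x)\varepsilon(v) = 1 - \varepsilon(x)v \in U(R).$$
Here $\varepsilon(v\cdot 1_G) = v$. As $v$ was an arbitrary unit of $R$, this gives $\varepsilon(x) \in \Delta(R)$.

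There is no real obstacle. The only point to check is that every unit of $R$ lifts to a unit of $RG$ mapping back onto it under $\varepsilon$, and this is immediate from the embedding $R \subseteq RG$. This is why the inclusion holds for every group $G$, with no extra hypotheses.
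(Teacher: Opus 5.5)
Your proof is correct and matches the paper's argument. The paper also regards $u\in U(R)$ as a unit of $RG$, and applies the ring homomorphism $\varepsilon$ to $1-ud\in U(RG)$ using $\varepsilon(U(RG))\subseteq U(R)$; the only difference is that you use the equivalent characterization $1-xv$ rather than $1-ud$.
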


\begin{proof}
Let \( d \in \Delta(RG) \) and \( u \in U(R) \). Since \(\varepsilon(u) = u\) and \(\varepsilon(U(RG)) \subseteq U(R)\), it follows directly that
	\[
	1 - u \varepsilon(d) = \varepsilon(1 - u d) \in \varepsilon(U(RG)) \subseteq U(R),
	\]
as needed.
\end{proof}

Our second main result is the following one.

\begin{theorem}\label{group ring}
Let $R$ be a ring and $G$ a non-trivial group.

(2) If $RG$ is a $U\sqrt{\Delta}$-ring, then $R$ is a $U\sqrt{\Delta}$-ring and $G$ is a $p$-group for a prime $p \in J(R)$.

(2) If $R$ is a $U\sqrt{\Delta}$-ring and $G$ is a locally finite $p$-group for a prime $p \in J(R)$, then $RG$ is a $U\sqrt{\Delta}$-ring.
\end{theorem}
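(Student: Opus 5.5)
For part (1), I pass to the quotient. Since $R \cong RG/\varepsilon(RG)$, Lemma~\ref{facto} shows at once that $R$ is a $U\sqrt{\Delta}$-ring.

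For the group, fix $1\neq g\in G$ and consider the non-unit $1-g$; it is a non-unit because it lies in the augmentation ideal, and $\varepsilon(U(RG))\subseteq U(R)$. Write $1-g=uz$ with $u\in U(RG)$ and $z\in\sqrt{\Delta(RG)}$. Applying $\varepsilon$ gives $0=\varepsilon(u)\varepsilon(z)$, so $\varepsilon(z)=0$.

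The key step is to show that $g$ has finite order equal to a power of a prime $p$ with $p\in J(R)$. I would argue in the commutative subring $R\langle g\rangle$ via rational closedness. A cleaner route is to use Proposition~\ref{indecomposable}(1) and Lemma~\ref{center}: the center $C(RG)$ is local by Proposition~\ref{center UJ}. If $g$ had infinite order, I would take a central element built from $g$; the natural candidate is $g$ itself when $G$ is abelian, and in general the class sums of finite conjugacy classes. Another option is to reduce to $R\langle g\rangle$, since $R\langle g\rangle$ is rationally closed in $RG$. Then $R\langle g\rangle$ is handled as in Example~\ref{laurent}: when $g$ has infinite order, the element $1+g+g^2$ leads to a contradiction. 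When $g$ has finite order $n$, the idempotent-type elements from distinct prime factors of $n$ contradict indecomposability. Concretely, for $n=qm$ with $q$ prime, $\hat{h}=\sum h^i$ over the subgroup $\langle h\rangle$ of order $q$ satisfies $\hat h^2=q\hat h$. The lemma that each integer $n\cdot1$ is a unit or lies in $\Delta(R)$ then shows that $q$ must be a non-unit. Otherwise $q^{-1}\hat h$ would be a nontrivial idempotent, and I would show it is central in $R\langle g\rangle$, contradicting locality of that commutative piece. So every prime divisor of every element order lies in $\Delta(R)\cap C(R)$, and I would want it in $J(R)$. Since integers are central, $\Delta(R)\cap C(R)\subseteq J(R)$ holds because $1-rp=1-pr$ and $p$ is central, quasi-regularity extends to all $r$ via $\Delta$ being closed under $U(R)$-multiples, and $R$ being $U\sqrt{\Delta}$ lets every non-unit $r$ be written as $ud$. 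Uniqueness of $p$ follows from the fact that two distinct primes in $J(R)$ would make $1$ an integer combination of them, hence $1\in J(R)$.

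For part (2), I use the standard fact that when $G$ is a locally finite $p$-group and $p\in J(R)$, the augmentation ideal $\varepsilon(RG)$ lies in $J(RG)$. This holds because each finitely generated subgroup is a finite $p$-group, so $(1-g)$ is nilpotent modulo $pRG$, and $p\in J$. Then $RG/\varepsilon(RG)\cong R$, and the converse direction of Lemma~\ref{facto} gives the result.

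The main obstacle is the infinite-order and non-central steps in part (1): transferring $U\sqrt{\Delta}$-type information to the commutative subring $R\langle g\rangle$. I would try first to show that $1-g$ lies in $\sqrt{\Delta(RG)}$ up to units, using $\varepsilon(z)=0$, and then use Lemma~\ref{property of J sharp}(2) to make $g-\text{stuff}$ invertible, forcing $g$ to be torsion.
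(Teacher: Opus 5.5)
Your proof that $R$ is a $U\sqrt{\Delta}$-ring agrees with the paper. The quotient step is legitimate because $\varepsilon$ splits, so Lemma~\ref{1} gives $\varepsilon(\sqrt{\Delta(RG)})\subseteq\sqrt{\Delta(R)}$. Your part (2) also agrees with the paper.

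The gap is the claim that $G$ is a $p$-group with $p\in J(R)$: none of your routes carries the $U\sqrt{\Delta}$ hypothesis to where you use it.
\begin{itemize}
\item $R\langle g\rangle$ is commutative only if $R$ is.
\item Rational closedness of $R\langle g\rangle$ in $RG$ transfers \emph{units}, not $U\sqrt{\Delta}$-factorizations. If $1+g+g^2=uz$ in $RG$, nothing places $u$ or $z$ in $R\langle g\rangle$. In fact the property does not descend to rationally closed subrings: $T_2(\mathbb{Z}_2)$ is rationally closed in the $U\sqrt{\Delta}$-ring $M_2(\mathbb{Z}_2)$, yet it is not $U\sqrt{\Delta}$.
\item So Example~\ref{laurent}, Proposition~\ref{indecomposable}(1) and Proposition~\ref{center UJ} cannot be applied inside $R\langle g\rangle$. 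In $RG$ itself, $1+g+g^2$ and $q^{-1}\hat h$ are not central in general, and non-central idempotents are harmless ($M_2(\mathbb{Z}_2)$ has them).
\item Class sums do not exist for infinite conjugacy classes.
\item Factoring $1-g=uz$ and noting $\varepsilon(z)=0$ leads nowhere, because you applied the hypothesis to an element of augmentation $0$.
\end{itemize}

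The missing idea is to apply the hypothesis to elements of augmentation $1$. Take $\alpha\in\varepsilon(RG)$ and $\beta\in RG$, and suppose $1-\alpha\beta\notin U(RG)$. Write $1-\alpha\beta=uj$ with $u\in U(RG)$ and $j\in\sqrt{\Delta(RG)}$. Then $1=\varepsilon(u)\varepsilon(j)$ puts $\varepsilon(j)$ in $U(R)\cap\sqrt{\Delta(R)}$, which is empty by Remark~\ref{local}(4). Hence $\varepsilon(RG)\subseteq J(RG)$, and the paper concludes by citing Connell's Proposition~15(i).

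Once this inclusion is known, your own tools also finish the job:
\begin{itemize}
\item If $g$ has infinite order, then $1-g+g^2=1-(1-g)g$ is a unit of $RG$. By rational closedness (now used correctly, for units) it is a unit of $R\langle g\rangle\cong R[x,x^{-1}]$. Comparing coefficients shows this is impossible for $R\neq 0$.
\item If $h$ has prime order $q$, then $\hat h-q=\sum_{i=1}^{q-1}(h^i-1)\in J(RG)$ and $\hat h(1-h)=0$. If $q$ were a unit of $R$, then $\hat h$ would be a unit of $RG$, forcing $h=1$. So $q\notin U(R)$.
\item From $q\notin U(R)$, your centrality argument (which needs Lemma~\ref{property of J sharp}(6) to get $dq\in\Delta(R)$) gives $q\in\Delta(R)\cap C(R)\subseteq J(R)$.
\item Uniqueness of $p$ then follows exactly as you said.
\end{itemize}
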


\begin{proof}
(1) Since $\varepsilon(\Delta(RG)) \subseteq \Delta(R)$, it follows that $\varepsilon(\sqrt{\Delta(RG)}) \subseteq \sqrt{\Delta(R)}$. Next, we intend to show that $\varepsilon(RG) \subseteq J(RG)$. To that goal, suppose $\alpha \in \varepsilon(RG) \setminus J(RG)$. Then, there exists an element $\beta \in RG$ such that $1-\alpha\beta$ does not belong to $U(RG)$. But, since $RG$ is a $U\sqrt{\Delta}$-ring, there exist $u \in U(RG)$ and $j \in \sqrt{\Delta(RG)}$ such that $1-\alpha\beta = uj$. As $\varepsilon(RG)$ is an ideal, we perceive $1 = \varepsilon(u)\varepsilon(j)$, guaranteeing $$\varepsilon(u)^{-1} = \varepsilon(j) \in U(R) \cap \sqrt{\Delta(R)},$$ a contradiction. Hence, $\varepsilon(RG) \subseteq J(RG)$, as intended to prove. Now, consulting with \cite[Proposition~15(i)]{con}, $G$ is a $p$-group for a prime $p \in J(R)$. Moreover, since $RG/\varepsilon(RG) \cong R$, Proposition \ref{facto} assures that $R$ is a $U\sqrt{\Delta}$-ring.

(2) A consultation with \cite[Lemma~2]{zhouclean} leads to $\varepsilon(RG) \subseteq J(RG)$. But, since $RG/\varepsilon(RG) \cong R$, it follows from Proposition \ref{facto} that $RG$ is a $U\sqrt{\Delta}$-ring.
\end{proof}


It is known that every torsion locally solvable group is locally finite (see, e.g., \cite[Theorem~5.4.11]{Robinson}). Thus, the following assertion is an immediate consequence of Theorem~\ref{group ring}.

\begin{corollary}\label{loc}
Let $R$ be a ring and $G$ a locally solvable group. Then, $RG$ is a $U\sqrt{\Delta}$-ring if and only if $R$ is a $U\sqrt{\Delta}$-ring and $G$ is a locally finite $p$-group for a prime $p \in J(R)$.
\end{corollary}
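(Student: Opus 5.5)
The corollary is a two-way statement, and I would prove each direction by reducing it to the corresponding part of Theorem~\ref{group ring}. If $G$ is trivial then $RG\cong R$; a trivial group is a locally finite $p$-group for any prime $p$. I would handle this degenerate case separately, or simply assume $G$ non-trivial as in the theorem, where the statement has content.

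\textbf{Necessity.} Suppose $RG$ is a $U\sqrt{\Delta}$-ring. By part (1) of Theorem~\ref{group ring}, $R$ is a $U\sqrt{\Delta}$-ring and $G$ is a $p$-group for some prime $p\in J(R)$. In particular $G$ is torsion, since every element has $p$-power order.

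To upgrade ``$p$-group'' to ``locally finite $p$-group'', I would invoke the cited fact \cite[Theorem~5.4.11]{Robinson}: a torsion locally solvable group is locally finite. Concretely, every finitely generated subgroup $H\le G$ is solvable (local solvability) and torsion. A finitely generated solvable torsion group is finite, which follows by induction on derived length, since each abelian factor in the derived series is finitely generated and torsion, hence finite. So $G$ is a locally finite $p$-group with $p\in J(R)$.

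\textbf{Sufficiency.} Suppose $R$ is a $U\sqrt{\Delta}$-ring and $G$ is a locally finite $p$-group with $p\in J(R)$. Then part (2) of Theorem~\ref{group ring} gives directly that $RG$ is a $U\sqrt{\Delta}$-ring. Note that local solvability is not needed for this direction.

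\textbf{Main obstacle.} The only nontrivial ingredient is the passage from torsion to locally finite, which is where local solvability is used. This is handled by the quoted group-theoretic result (or the short induction above). Everything else is a direct application of the theorem.
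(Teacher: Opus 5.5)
Your proposal is correct and follows the paper's own argument. Both directions come from the corresponding parts of Theorem~\ref{group ring}, and the fact that torsion locally solvable groups are locally finite (Robinson) upgrades ``$p$-group'' to ``locally finite $p$-group'' in the necessity direction.

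There are two small caveats.

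\textbf{Trivial group.} The case $G=1$ cannot be ``handled separately''. For $R=\mathbb{Q}$ and $G=1$, the ring $RG\cong R$ is a $U\sqrt{\Delta}$-ring, but no prime lies in $J(R)=0$. So $G\neq 1$ must be assumed, as in the theorem.

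\textbf{Your induction.} For a finitely generated subgroup $H$, the finite generation of $H'$ needs justification. It follows from Schreier's lemma, because $H/H'$ is finite.
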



Let $F$ be a field and let $G$ a locally solvable group. The above result could be applied to deduce that $FG$ is a $U\sqrt{\Delta}$-ring if and only if $FG$ is a $UN$-ring, if and only if $F$ has positive characteristic $p$ and $G$ is a locally finite $p$-group. More concretely, we establish the following.

\begin{corollary}
Let $F$ be an uncountable field and $G$ a group. Then, $FG$ is a $U\sqrt{\Delta}$-ring if and only if $FG$ is a $UN$-ring. In this case, $F$ has positive characteristic $p$ and $G$ is a locally finite $p$-group.
\end{corollary}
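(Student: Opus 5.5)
The implication from $UN$ to $U\sqrt{\Delta}$ is automatic, since ${\rm Nil}(R)\subseteq\sqrt{\Delta(R)}$ by Remark~\ref{local}(1). So the whole task is the converse together with the structural description of $F$ and $G$. The plan is to reduce everything to the behaviour of the augmentation ideal $\varepsilon(FG)$, and then to use uncountability of $F$ to upgrade "contained in the Jacobson radical" to "nil".

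\textbf{Step 1.} Assume $FG$ is a $U\sqrt{\Delta}$-ring and $G\neq 1$; the case $G=1$ is trivial, since $F$ is then both $UN$ and $U\sqrt{\Delta}$. By the first main theorem of this section, $FG$ is local, and its unique maximal ideal is exactly $\varepsilon(FG)$, because $FG/\varepsilon(FG)\cong F$ is a field. Part (1) of Theorem~\ref{group ring}, or rather its proof, gives $\varepsilon(FG)\subseteq J(FG)$ and shows that $G$ is a $p$-group for a prime $p\in J(F)=0$. Hence $p=0$ in $F$, that is, ${\rm char}(F)=p>0$.

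\textbf{Step 2 (main obstacle).} Show that $J(FG)$ is nil when $F$ is uncountable. Take $\alpha\in J(FG)$. Its support generates a finitely generated, hence countable, subgroup $H$. I would use the standard fact that $J(FG)\cap FH\subseteq J(FH)$ for any subgroup $H\le G$ (Passman); this follows because $FG$ is a free $FH$-module with $FH$ as a direct summand, so quasi-invertibility descends via the projection onto $FH$. The algebra $FH$ has countable dimension over the uncountable field $F$, so Amitsur's theorem gives that $J(FH)$ is nil. Thus $\alpha$ is nilpotent. I expect this descent-plus-Amitsur argument to be the delicate point, and I would cite Passman's book for both ingredients rather than reprove them.

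\textbf{Step 3.} Combining Steps 1 and 2, every non-unit of $FG$ lies in $\varepsilon(FG)\subseteq J(FG)\subseteq{\rm Nil}(FG)$. Writing such a non-unit as $1\cdot\alpha$ shows that $FG$ is a $UN$-ring.

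\textbf{Step 4.} For local finiteness, let $H$ be a finitely generated subgroup of $G$. Its augmentation ideal $\varepsilon(FH)\subseteq\varepsilon(FG)$ is nil, so for every $h\in H$ the element $h-1$ is nilpotent; this recovers that $G$ is a $p$-group. For local finiteness I would invoke the classical characterization (Connell/Passman): over a field of characteristic $p$, the augmentation ideal $\varepsilon(FH)$ is nil precisely when $H$ is a locally finite $p$-group. Applying it to each finitely generated $H$ shows that $H$ is finite, so $G$ is locally finite. Alternatively, if that citation is unavailable, I would first try to argue that $FH$ is local with nil $J$ and then appeal to Connell's theorem that $FH$ local forces $H$ to be a locally finite $p$-group.
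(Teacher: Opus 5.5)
Your Steps 1--3 are essentially the paper's proof, with its implicit points made explicit. The paper's argument is: $FG$ is local with maximal ideal $\varepsilon(FG)$, so it suffices that $J(FG)$ is nil; pass to the subgroup $H$ generated by the support; use $J(FG)\cap FH\subseteq J(FH)$; and apply Amitsur's theorem to the countable-dimensional algebra $FH$. Your justification of the descent is correct: the projection $FG\to FH$ is an $FH$-bimodule map, so it carries the inverse of $1-yx$ into $FH$. Deriving ${\rm char}(F)=p$ from Theorem~\ref{group ring}(1) is also fine when $G\neq 1$.

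The gap is Step 4. The ``classical characterization'' you cite, that $\varepsilon(FH)$ nil forces $H$ to be a locally finite $p$-group, is not an available theorem in the direction you need.

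\begin{itemize}
\item Your fallback does not help. Connell's result on local group rings, which the paper uses in Theorem~\ref{group ring}(1), only gives that $H$ is a $p$-group, not that it is locally finite.
\item Because $J(FH)$ is nil here, ``$\varepsilon(FH)$ nil'' is equivalent to ``$FH$ is local''. For finitely generated $H$ it asks whether the finitely generated nil algebra $\varepsilon(FH)$ must be nilpotent.
\item As far as I know, whether a local group algebra must come from a locally finite group is open in general. A Golod-type finitely generated infinite $p$-group with nil augmentation ideal has not been ruled out.
\end{itemize}

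This is exactly why the paper assumes local solvability in Corollary~\ref{loc}. There, ``torsion and locally solvable implies locally finite'' closes the gap.

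The paper's own proof of this corollary does not address the ``in this case'' clause at all, so following the paper will not repair Step 4. For arbitrary $G\neq 1$, your argument legitimately proves that $FG$ is $UN$, that ${\rm char}(F)=p$, and that $G$ is a $p$-group. Local finiteness needs an extra hypothesis such as local solvability.

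Finally, for $G=1$ the characteristic claim is false: take $F=\mathbb{C}$. Your remark that this case is trivial covers only the equivalence, so the second sentence of the statement needs $G\neq 1$.
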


\begin{proof}
Assume that $FG$ is a $U\sqrt{\Delta}$-ring. To show that $FG$ is a $UN$-ring, it suffices to prove that $J(FG)$ is nil. Let $x \in J(FG)$, and let $H$ be the subgroup of $G$ generated by the support of $x$. Since $H$ is countable, $FH$ is a countably generated $F$-algebra. Applying \cite[Corollary~4.21]{lafc}, $J(FH)$ is nil. Now, since $x \in J(FG) \cap FH \subseteq J(FH)$, it follows that $x$ is nilpotent, as required.
\end{proof}

We finish the work in this paper with the following result.

\begin{proposition}
Let $R$ be an Artinian ring. Then, $RG$ is a $U\sqrt{\Delta}$-ring if and only if $(R/J(R))G$ is a $U\sqrt{\Delta}$-ring.
\end{proposition}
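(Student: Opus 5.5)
The plan is to reduce everything to Lemma~\ref{facto}, applied to the ideal $J(R)G$ of $RG$. Since $R$ is Artinian, $J(R)$ is nilpotent, say $J(R)^k=0$. Then $J(R)G$ is a two-sided ideal of $RG$ with $(J(R)G)^k\subseteq J(R)^kG=0$. To see this, note that any product of $k$ elements of $J(R)G$ has coefficients that are sums of products of $k$ elements of $J(R)$. A nilpotent ideal is contained in the Jacobson radical, so $J(R)G\subseteq J(RG)$.

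Next, I will record the standard isomorphism $RG/J(R)G\cong (R/J(R))G$. It is induced by reducing coefficients modulo $J(R)$. This map is clearly a surjective ring homomorphism, and its kernel is exactly $J(R)G$.

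With these two facts in hand, Lemma~\ref{facto} gives both directions at once:
\begin{itemize}
\item If $RG$ is a $U\sqrt{\Delta}$-ring, then its quotient $RG/J(R)G\cong (R/J(R))G$ is a $U\sqrt{\Delta}$-ring, by the first part of the lemma.
\item If $(R/J(R))G$ is a $U\sqrt{\Delta}$-ring, then, since $J(R)G\subseteq J(RG)$, the converse part of Lemma~\ref{facto} lifts the property to $RG$.
\end{itemize}

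The only step needing any care is the inclusion $J(R)G\subseteq J(RG)$. For an arbitrary ring it can fail, but nilpotency of $J(R)$ under the Artinian hypothesis makes it immediate: every element of a nilpotent two-sided ideal $I$ satisfies $1-xr\in U(RG)$ for all $r\in RG$, because $xr\in I$ is nilpotent. No property of $G$ is needed, so the equivalence holds for every group $G$.
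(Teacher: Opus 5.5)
Your proof is correct and follows the paper's argument: both use $J(R)G\subseteq J(RG)$ and $RG/J(R)G\cong (R/J(R))G$, then apply Lemma~\ref{facto} in both directions. The only difference is that you prove the inclusion $J(R)G\subseteq J(RG)$ directly from the nilpotency of $J(R)$, whereas the paper simply cites a proposition of Connell for it.
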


\begin{proof}
Since $R$ is an Artinian ring, we may follow \cite[Proposition 3]{con} to extract that $J(R)G\subseteq J(RG)$. On the other side, we know that the isomorphism $(R/J(R))G\cong RG/J(R)G$ is always true. Thus, the outcome follows immediately from Lemma~\ref{facto}, as expected.
\end{proof}


\bigskip

\noindent{\bf Acknowledgement.} This work is based upon research funded by Iran National Science Foundation (INSF) under project No. 40402401.

\medskip

\section*{Data availability}
No data was used for the research described in the article.

\section*{Declarations}
The authors declare no any conflict of interests while writing and preparing this manuscript.

\end{document}